\newtheorem{theo+}              {Theorem}           [section]
\newtheorem{prop+}  [theo+]     {Proposition}
\newtheorem{coro+}  [theo+]     {Corollary}
\newtheorem{lemm+}  [theo+]     {Lemma}
\newtheorem{exam+}  [theo+]     {Example}
\newtheorem{rema+}  [theo+]     {Remark}
\newtheorem{defi+}  [theo+]     {Definition}
\newenvironment{theorem}{\begin{theo+}}{\end{theo+}}
\newenvironment{corollary}{\begin{coro+}}{\end{coro+}}
\theoremstyle{plain} \theoremstyle{remark}
\newtheorem{remark}{Remark}
\newtheorem{example}{Example}
\def\E{/\kern-1.0em \equiv }
\title{Linear $\infty$-Harmonic maps between Rienmannian manifolds}
\author{Ze-Ping Wang}
\address{Department of Mathematics $\&$ Physics
Yunnan Wenshan Teachers'College No. 2 Xuefu Road Wenshan County
Wenshan, Yunnan 653000 People's Republic of China.
\newline\indent E-mail:zeping.wang@gmail.com }
\begin{document}

\title[Linear $\infty$-harmonic maps]{Linear $\infty$-harmonic maps between Riemannian manifolds}

\subjclass{58E20, 53C12} \keywords{$\infty$-harmonic maps, Nil
space, Sol space, Heisenberg space.}

\maketitle

\section*{Abstract}
\begin{quote}
{\footnotesize In this paper, we give complete classifications of
linear $\infty$-harmonic maps between Euclidean and Heisenberg
spaces, between Nil and Sol spaces. We also classify all
$\infty$-harmonic linear endomorphisms of Sol space and show that
there is a subgroup of $\infty$-harmonic linear automorphisms in the
group of linear automorphisms of Sol space.}
\end{quote}
\section{introduction}
In this paper, all objects including manifolds, metrics, maps, and
vector fields are assumed to be smooth unless it is stated otherwise.\\

$\infty$-Harmonic functions are solutions of the so-called
$\infty$-Laplace equation:
\begin{equation}\notag
\Delta_{\infty}u:=\frac{1}{2}\langle\nabla\,u, \nabla\left|\nabla
u\right|^{2}\rangle=\sum_{i,j=1}^{m}u_{ij}u_{i}u_{j}=0,
\end{equation}
where $u:\Omega \subset \mathbb{R}^{m} \longrightarrow \mathbb{R}$,
$u_{i}=\frac{\partial u}{\partial x^{i}}$ and
$u_{ij}=\frac{\partial^{2} u}{\partial x^{i}\partial x^{j}}$. The
$\infty$-Laplace equation was first found by G. Aronsson
(\cite{Ar1}, \cite{Ar2}) in his study of ``optimal" Lipschitz
extension of functions in the late 1960s.

The $\infty$-Laplace equation can be obtained as the formal limit,
as $p\rightarrow \infty$, of $p$-Laplace equation
\begin{equation}
\Delta_{p}\,u:=\left|\nabla\,u\right|^{p-2}\left(\Delta\,u+
\frac{p-2}{\left|\nabla\,u\right|^{2}}\Delta_{\infty}\,u\right)=0.
\end{equation}

In recent years, there has been a growing research work in the study
of the $\infty$-Laplace equation. For more history and developments
see e.g. \cite{CIL}, \cite{ACJ}, \cite{BB}, \cite{Ba}, \cite{BLW1},
\cite{BLW2}, \cite{BEJ}, \cite{Bh}, \cite{CE}, \cite{CEG},
\cite{CY}, \cite{EG}, \cite{EY}, \cite{J}, \cite{JK}, \cite{JLM1},
\cite{JLM2}, \cite{LM1}, \cite{LM2}, \cite{Ob}. For interesting
applications of the $\infty$-Laplace equation in image processing
see \cite{CMS}, \cite{Sa}, in mass transfer problems see
e.g. \cite{EG}, and in the study of shape metamorphism see e.g. \cite{CEPB}.\\

Very recently, Ou, Troutman, and Wilhelm \cite{OW} introduced and
studied $\infty$-harmonic maps between Riemannian manifolds as a
natural generalization of $\infty$-harmonic functions and as a map
between Riemannian manifolds that satisfies a system of PDE
obtained as the formal limit, as $p\rightarrow \infty$, of
$p$-harmonic map equation:
\begin{equation}\notag
\frac{\left|{\rm d}\varphi\right|^{2}\tau_{2}\left(\varphi
\right)}{(p-2)}+\frac{1}{2}{\rm d}\varphi\left({\rm grad}\left|{\rm
d}\varphi\right|^{2}\right)=0.
\end{equation}
According to  \cite{OW}, a map $\varphi:(M,g)\longrightarrow (N,h)$
between Riemannian manifolds is called an  $\infty$-{\em harmonic
map} if the gradient of its energy density is in the kernel of its
tangent map, i.e., $\varphi$ is a solution of the $PDEs$
\begin{equation}\label{PL111}
\tau_{\infty}\left(\varphi \right)=\frac{1}{2}{\rm
d}\varphi\left({\rm grad}\left|{\rm d}\varphi\right|^{2}\right)=0.
\end{equation}
 where $\left|{\rm d}\varphi\right|^{2}=Trace_{g}\varphi^{*}h$ is the
 energy density of $\varphi$.

\begin{corollary}\label{PL124}(see \cite{OW})

In local coordinates, a map $\varphi:(M,g)\longrightarrow (N,h)$
 with\\ $\varphi(x)=(\varphi^{1}(x),\varphi^{2}(x),\ldots,\varphi^{n}(x))$
 is $\infty$-harmonic map if and only if
\begin{equation}\label{PL129}
g\left({\rm grad} \varphi^{i},{\rm grad}\left|{\rm
d}\varphi\right|^{2}\right)=0,\;\;\;\;\;\;\;i=1, 2,\ldots, n.
\end{equation}
\end{corollary}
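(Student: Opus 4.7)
The plan is to unravel the definition \eqref{PL111} in local coordinates and extract scalar equations by using the linear independence of coordinate vector fields on $N$. The key observation is that for any vector field $X$ on $M$ and any smooth function $f$ on $M$, one has $X(f) = g(\text{grad}\,f, X)$, so the differential of $\varphi$ can be rewritten purely in metric terms.

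First I would fix local coordinates $\{x^\alpha\}$ on $M$ and $\{y^i\}$ on $N$, and write the tangent map applied to an arbitrary vector field $X$ as
\begin{equation}\notag
{\rm d}\varphi(X) \;=\; \sum_{i=1}^{n} X(\varphi^{i})\,\frac{\partial}{\partial y^{i}} \;=\; \sum_{i=1}^{n} g\bigl(\text{grad}\,\varphi^{i},\,X\bigr)\,\frac{\partial}{\partial y^{i}}.
\end{equation}
Specializing to $X = \text{grad}\,|{\rm d}\varphi|^{2}$, the vanishing of $\tau_{\infty}(\varphi)$ becomes
\begin{equation}\notag
\sum_{i=1}^{n} g\bigl(\text{grad}\,\varphi^{i},\,\text{grad}\,|{\rm d}\varphi|^{2}\bigr)\,\frac{\partial}{\partial y^{i}} \;=\; 0 .
\end{equation}

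Next I would invoke the linear independence of the coordinate frame $\{\partial/\partial y^{i}\}_{i=1}^{n}$ at every point of $\varphi(M)$: the above vector field along $\varphi$ vanishes identically if and only if each of its components vanishes, which yields exactly the system \eqref{PL129}. Conversely, if \eqref{PL129} holds, then each coefficient in the expansion of ${\rm d}\varphi(\text{grad}\,|{\rm d}\varphi|^{2})$ is zero, so $\tau_{\infty}(\varphi) = 0$ by \eqref{PL111}.

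There is really no substantive obstacle here; the statement is a direct local reformulation of the coordinate-free definition. The only subtlety worth mentioning is that the identification $X(\varphi^{i}) = g(\text{grad}\,\varphi^{i},X)$ uses the metric on the domain, so the equations \eqref{PL129} are scalar PDEs on $M$ (not on $N$), and their number equals $\dim N$ regardless of $\dim M$. Once this is set up, the equivalence is immediate from linear algebra.
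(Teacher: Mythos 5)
Your argument is correct: rewriting ${\rm d}\varphi(X)=\sum_i X(\varphi^i)\,\partial/\partial y^i=\sum_i g({\rm grad}\,\varphi^i,X)\,\partial/\partial y^i$, specializing $X={\rm grad}\,|{\rm d}\varphi|^2$, and using the pointwise linear independence of the frame $\{\partial/\partial y^i\}$ along $\varphi$ gives exactly the equivalence of \eqref{PL111} with \eqref{PL129}. Note that the paper itself supplies no proof of this corollary---it is quoted from \cite{OW}---so there is nothing to compare against; your derivation is the standard direct unraveling of the definition and is complete.
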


\begin{example}(see \cite{OW})
Many important and familiar families of maps between Riemannian
manifolds turn out to be $\infty$-harmonic maps. In particular, all
maps of the following classes are $\infty$-harmonic:
\begin{itemize}
\item $\infty$-harmonic functions,
\item totally geodesic
maps,
\item isometric immersions,
\item Riemannian submersions,
\item eigenmaps between spheres,
\item projections of multiply warped products (e.g., the projection of
the generalized Kasner spacetimes),
\item equator maps, and
\item radial
projections.
\end{itemize}
\end{example} For more details of the above and other examples, methods
of constructing $\infty$-harmonic maps into Euclidean spaces and
into spheres, study of a subclass of $\infty$-harmonic maps called
$\infty$-harmonic morphisms, study of the conformal change of
$\infty$-Laplacian on Riemannian manifolds and other results we refer the readers to \cite{OW}.\\

For some classifications of linear and quadratic $\infty$-harmonic
maps from and into a sphere, quadratic $\infty$-harmonic maps
between Euclidean spaces, linear and quadratic $\infty$-harmonic
maps between Nil and Euclidean spaces and between Sol and Euclidean
spaces see \cite{WO}.\\

In this paper, we give complete classifications of linear
$\infty$-harmonic maps between Euclidean and Heisenberg spaces,
between Nil and Sol spaces. We also classify all $\infty$-harmonic
linear automorphisms of Sol space and show that there is a subgroup
of $\infty$-harmonic linear automorphisms in the
group of linear automorphisms of Sol space.\\

\section{Linear $\infty$-harmonic maps between Euclidean and
Heisenberg spaces}

{\bf 2.1 Linear $\infty$-harmonic  maps from
Heisenberg space into a Euclidean space}.\\

 Let $\mathbb{H}_{3}$=$(\mathbb{R}^{3},g)$ denote
Heisenberg space, endowed with a left invariant metric, a
3-dimensional homogeneous metric whose group of isometries has
dimension 4. With respect to the standard coordinates $(x,y,z)$ in
$\mathbb{R}^{3}$, the metric can be written as $g={\rm d}x^{2}+{\rm
d}y^{2}+({\rm d}z+\frac{y}{2}{\rm d}x-\frac{x}{2}{\rm d}y)^{2}$
whose components are given by:
\begin{align}
& g_{11}=1+\frac{y^{2}}{4},\;
g_{12}=-\frac{xy}{4},g_{13}=\frac{y}{2},\;
g_{22}=1+\frac{x^{2}}{4},\;g_{23}=-\frac{x}{2},\;g_{33}=1;\\
&g^{11}=1,\;g^{12}=0,g^{13}=-\frac{y}{2},\;g^{22}=1,\;g^{23}=\frac{x}{2},\;g^{33}=1+\frac{x^{2}+y^{2}}{4}.\label{h3}
\end{align}
Now, let $\varphi:\mathbb{H}_{3}\longrightarrow \mathbb{R}^{n}$ with
\begin{equation}\label{MA214}
\varphi (X)=\left(\begin{array}{ccc} a_{11} & a_{12} &
a_{13}\\a_{21} & a_{22} & a_{23}\\\ldots & \ldots & \ldots\\a_{n1} &
a_{n2} & a_{n3}
\end{array}\right)\left(\begin{array}{ccc} x\\y\\z\end{array}\right)
\end{equation}
 be a linear map from Heisenberg space
into a Euclidean space. Then, we have
\begin{theorem}\label{KL254}
 A linear map $\varphi:\mathbb{H}_{3}\longrightarrow \mathbb{R}^{n}$ with $\varphi (X)=AX$, where $A$ is the
representation matrix with column vectors $A_1, A_2, A_3$, is
$\infty$-harmonic if and only if $A_3=0$, or $A_1, A_2,$ and $A_3$
are proportional to each other.
\end{theorem}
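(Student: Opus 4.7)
The plan is to apply Corollary \ref{PL124}, which reduces checking $\infty$-harmonicity to verifying that $g({\rm grad}\,\varphi^i, {\rm grad}|{\rm d}\varphi|^2) = 0$ for each $i = 1, \ldots, n$. Since the components $\varphi^i(x,y,z) = a_{i1}x + a_{i2}y + a_{i3}z$ are linear, their first partials are constants, and the entire $(x,y,z)$-dependence of the $\infty$-harmonic equations lives in the inverse Heisenberg metric \eqref{h3}.

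First I would compute the energy density directly from $|{\rm d}\varphi|^2 = \sum_i \sum_{j,k} g^{jk} a_{ij} a_{ik}$, obtaining
$$|{\rm d}\varphi|^2 = |A_1|^2 + |A_2|^2 + \left(1 + \tfrac{x^2+y^2}{4}\right)|A_3|^2 - y\langle A_1, A_3\rangle + x\langle A_2, A_3\rangle,$$
where $\langle\cdot,\cdot\rangle$ denotes the standard Euclidean inner product on $\mathbb{R}^n$. The key observation is that $|{\rm d}\varphi|^2$ is independent of $z$, so $\partial_z|{\rm d}\varphi|^2 = 0$, while
$$\partial_x|{\rm d}\varphi|^2 = \tfrac{x}{2}|A_3|^2 + \langle A_2, A_3\rangle, \qquad \partial_y|{\rm d}\varphi|^2 = \tfrac{y}{2}|A_3|^2 - \langle A_1, A_3\rangle.$$

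Substituting these into the $n$ scalar conditions from Corollary \ref{PL124} and assembling them into a single vector identity in $\mathbb{R}^n$ (using that $g^{12} = 0$ and $g^{13} = -y/2$, $g^{23} = x/2$) yields
$$\bigl(\partial_x|{\rm d}\varphi|^2\bigr)A_1 + \bigl(\partial_y|{\rm d}\varphi|^2\bigr)A_2 + \bigl(-\tfrac{y}{2}\partial_x|{\rm d}\varphi|^2 + \tfrac{x}{2}\partial_y|{\rm d}\varphi|^2\bigr)A_3 = 0,$$
which must hold for every $(x,y,z)\in\mathbb{R}^3$. Once the partials are plugged in, the left-hand side is a polynomial in $x,y$ with vector coefficients in $\mathbb{R}^n$, so each coefficient must vanish. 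Matching the coefficient of $x$, the coefficient of $y$, and the constant term produces the three vector equations
$$|A_3|^2 A_1 = \langle A_1, A_3\rangle A_3, \qquad |A_3|^2 A_2 = \langle A_2, A_3\rangle A_3, \qquad \langle A_2, A_3\rangle A_1 = \langle A_1, A_3\rangle A_2.$$

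To finish, I would analyze these three equations. If $A_3 = 0$, all three are trivially satisfied, giving the first case of the theorem. If $A_3 \neq 0$, the first two equations force $A_1 = (\langle A_1, A_3\rangle/|A_3|^2)A_3$ and $A_2 = (\langle A_2, A_3\rangle/|A_3|^2)A_3$, so $A_1, A_2, A_3$ are pairwise proportional, and the third equation follows automatically. Conversely, both cases make the three equations hold by direct substitution. I do not expect a real obstacle; the only mildly delicate point is tracking how the $g^{13}$ and $g^{23}$ terms of the inverse Heisenberg metric (each linear in $x, y$) combine cleanly with the analogous cross terms arising in $\partial_x|{\rm d}\varphi|^2$ and $\partial_y|{\rm d}\varphi|^2$ so that the $xy$-terms cancel and the resulting polynomial identity has only a linear part and a constant part.
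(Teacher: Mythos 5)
Your proposal is correct and follows essentially the same route as the paper: compute the energy density and its gradient, apply Corollary \ref{PL124}, and compare coefficients of the resulting polynomial identity in $x,y$ to obtain exactly the three vector equations $|A_3|^2A_1=\langle A_1,A_3\rangle A_3$, $|A_3|^2A_2=\langle A_2,A_3\rangle A_3$, and $\langle A_2,A_3\rangle A_1=\langle A_1,A_3\rangle A_2$, which are the paper's equations (\ref{M158})--(\ref{L3}) written in vector form. Your final case analysis ($A_3=0$ versus $A_3\neq 0$) is, if anything, slightly more explicit than the paper's concluding sentence.
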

\begin{proof}
A straightforward computation using (\ref{h3}) gives:
\begin{equation}\label{00E201}
\begin{array}{lll}
\nabla\varphi^{i}=g^{\alpha\beta}\frac{\partial
\varphi^{i}}{\partial x_{\beta}}\frac{\partial }{\partial
x_{\alpha}}\\
=(a_{i1}-\frac{1}{2}a_{i3}y,\,a_{i2}+\frac{1}{2}a_{i3}x,\,\frac{1}{4}a_{i3}(x^{2}+y^{2})
+\frac{1}{2}a_{i2}x-\frac{1}{2}a_{i1}y+a_{i3}),\;\;\;\;\;\;i=1, 2,
..., n,\\\notag
\end{array}
\end{equation}
\begin{equation}\label{00E202}
\begin{array}{lll}
\left|{\rm d}\varphi\right|^{2}=g^{\alpha
\beta}{\varphi_{\alpha}}^{i}{\varphi_{\beta}}^{j}\delta_{ij}\\
=\frac{1}{4}\sum\limits_{i=1}^{n}a_{i3}^{2}x^{2}+\sum\limits_{i=1}^{n}a_{i2}a_{i3}x
+\frac{1}{4}\sum\limits_{i=1}^{n}a_{i3}^{2}y^{2}-\sum\limits_{i=1}^{n}a_{i1}a_{i3}y+\sum\limits_{j=1}^{3}\sum\limits_{i=1}^{n}a_{ij}^{2}\\\notag
\end{array}
\end{equation}
and
\begin{equation}\label{00E203}
\begin{array}{lll}
\frac{\partial \left|{\rm d}\varphi\right|^{2}}{\partial x_{1}}
=\frac{\partial \left|{\rm d}\varphi\right|^{2}}{\partial x}
=\frac{1}{2}\sum\limits_{i=1}^{n}a_{i3}^{2}x+\sum\limits_{i=1}^{n}a_{i2}a_{i3},\\
\frac{\partial \left|{\rm d}\varphi\right|^{2}}{\partial
x_{2}}=\frac{\partial \left|{\rm d}\varphi\right|^{2}}{\partial y}=
\frac{1}{2}\sum\limits_{i=1}^{n}a_{i3}^{2}y-\sum\limits_{i=1}^{n}a_{i1}a_{i3},\\
\frac{\partial \left|{\rm d}\varphi\right|^{2}}{\partial
x_{3}}=\frac{\partial \left|{\rm d}\varphi\right|^{2}}{\partial z}=0.\\

\end{array}
\end{equation}
It follows from corollary\ref{PL124} that $\varphi$ is
$\infty$-harmonic if and only if
\begin{equation}\label{00E2}
g(\nabla\,\varphi^{i}, \nabla\left|{\rm
d}\varphi\right|^{2})=0,\;\;\;\;\;\;\; i=1, 2, \ldots, n.
\end{equation}
which is equivalent to
\begin{equation}\label{N3}
\begin{array}{lll}
\frac{1}{2}(a_{i1}\sum\limits_{j=1}^{n}a_{j3}^{2}-a_{i3}\sum\limits_{j=1}^{n}a_{j1}a_{j3})x
+\frac{1}{2}(a_{i2}\sum\limits_{j=1}^{n}a_{j3}^{2}-a_{i3}\sum\limits_{j=1}^{n}a_{j2}a_{j3})y
\\+a_{i1}\sum\limits_{j=1}^{n}a_{j2}a_{j3}-a_{i2}\sum\limits_{j=1}^{n}a_{j1}a_{j3}=0
\end{array}
\end{equation}
for $i=1, 2, \ldots, n$ and for any $x, y$. By comparing the
coefficients of the polynomial identity we have
\begin{eqnarray}\label{M158}
&&
a_{i1}\sum\limits_{j=1}^{n}a_{j3}^{2}-a_{i3}\sum\limits_{j=1}^{n}a_{j1}a_{j3}=0,\;\;\;\;\;\;\;
i=1, 2, \ldots, n,
\\\label{L2}
&&
a_{i2}\sum\limits_{j=1}^{n}a_{j3}^{2}-a_{i3}\sum\limits_{j=1}^{n}a_{j2}a_{j3}=0,\;\;\;\;\;\;\;
i=1, 2, \ldots, n,
\\\label{L3}
&&
a_{i1}\sum\limits_{j=1}^{n}a_{j2}a_{j3}-a_{i2}\sum\limits_{j=1}^{n}a_{j1}a_{j3}=0
,\;\;\;\;\;\;\; i=1, 2, \ldots, n.
\end{eqnarray}

Noting that $A_i=(a_{1i},\ldots a_{ni})^t$ for $i=1, 2, 3$ are the
column vectors of $A$ we conclude that the system of equations
(\ref{M158}), (\ref{L2}), (\ref{L3}) is  equivalent to $A_1//A_3,\;
A_2//A_3$, and $A_1//A_2$, or, $A_3=0$,  from which the theorem
follows.
\end{proof}
\begin{remark}
It follows from our theorem that the maximum rank of the linear
$\infty$-harmonic map from Heisenberg space into a Euclidean space
is $2$.
\end{remark}
\begin{example}
Let $\varphi:\mathbb{H}_{3}\longrightarrow \mathbb{R}^{n},$ with
\begin{equation}\label{MA214}
\varphi (X)=\left(\begin{array}{ccc} 1 & 1 & 1\\2 & 2 & 2\\\ldots &
\ldots & \ldots\\n & n & n
\end{array}\right)\left(\begin{array}{ccc} x\\y\\z\end{array}\right).
\end{equation}
Then, by our theorem, $\varphi$ is an $\infty$-harmonic map with
non-constant energy density \\$|{\rm
d}\varphi|^2=\frac{1}{4}|A_3|^2(x^2+y^2)+|A_3|^2(x-y)+3|A_3|^2$,
where $|A_3|^2=\frac{n(n+1)(2n+1)}{6}$.
\end{example}

{\bf 2.2 Linear $\infty$-harmonic  maps from a Euclidean space
into Heisenberg space}.\\

\begin{theorem}\label{KL426}
Let $\varphi:\mathbb{R}^{m}\longrightarrow\mathbb{H}_{3}$ with
\begin{equation}\label{KL404}
\varphi (X)=\left(\begin{array}{cccc} a_{11} & a_{12} &
\ldots & a_{1m}\\
a_{21} & a_{22} & \ldots&a_{2m}\\
 a_{31} & a_{32} & \ldots &a_{3m}
\end{array}\right)\left(\begin{array}{cccc}x_{1}\\

x_{2}\\

\vdots\\

x_{m}\\

\end{array}\right)
\end{equation}
be a linear map from a Euclidean space into Heisenberg space.
 Then, $\varphi$ is $\infty$-harmonic if
and only if the row vectors $A^{1}, A^{2}$ are proportional to each
other.
\end{theorem}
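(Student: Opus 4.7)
The plan is to apply Corollary~\ref{PL124} in the standard Euclidean coordinates on $\mathbb{R}^m$, in which $\nabla\varphi^i$ reduces to the $i$-th row vector $A^i$ of $A$. The key observation is that pulling back the Heisenberg metric via the linear map $\varphi$ yields an energy density that depends on $X$ only through $\varphi^1$ and $\varphi^2$, since the metric coefficients in (\ref{h3}) involve only the first two target coordinates. A direct computation should give
\[
|d\varphi|^2 = \frac{|A^2|^2}{4}(\varphi^1)^2 + \frac{|A^1|^2}{4}(\varphi^2)^2 - \frac{\langle A^1,A^2\rangle}{2}\varphi^1\varphi^2 - \langle A^2,A^3\rangle \varphi^1 + \langle A^1,A^3\rangle \varphi^2 + |A^1|^2+|A^2|^2+|A^3|^2,
\]
and then by the chain rule $\nabla|d\varphi|^2 = P_1 A^1 + P_2 A^2$, where each $P_i := \partial_{\varphi^i}|d\varphi|^2$ is an explicit affine function of $(\varphi^1,\varphi^2)$.

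Next I would feed this into Corollary~\ref{PL124}, reducing the $\infty$-harmonic condition to the three scalar identities
\[
P_1 \langle A^i,A^1\rangle + P_2 \langle A^i,A^2\rangle = 0, \qquad i=1,2,3,
\]
required to hold for every $X\in\mathbb{R}^m$. Expanding the $i=1$ identity and collecting terms, I expect the coefficient of $\varphi^1$ to simplify to $\frac{1}{2}(|A^1|^2 |A^2|^2 - \langle A^1,A^2\rangle^2)$ while the coefficient of $\varphi^2$ vanishes identically. Since $\varphi^1 = \langle A^1,X\rangle$, the vanishing of this linear part for every $X$ forces either $A^1=0$ or, via Cauchy--Schwarz saturation, $A^1\parallel A^2$; in both cases $A^1$ and $A^2$ are proportional, which establishes necessity.

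For sufficiency I would substitute $A^2=\lambda A^1$ (so that $\varphi^2=\lambda\varphi^1$) back into $P_1$ and $P_2$; a direct simplification should collapse them to the constants $-\lambda\langle A^1,A^3\rangle$ and $\langle A^1,A^3\rangle$ respectively, after which each of the three identities reduces trivially to $0=0$. The degenerate case $A^1=0$ (or $A^2=0$) is covered by the same proportionality statement. The main obstacle I anticipate is the algebraic bookkeeping in pulling back the somewhat intricate Heisenberg metric to obtain the closed form of $|d\varphi|^2$, together with the careful step of reading a Cauchy--Schwarz saturation out of an identity that is a priori a polynomial in $X$ rather than in $\varphi^1,\varphi^2$; once these two points are settled, the remainder amounts to verification.
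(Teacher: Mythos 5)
Your proposal is correct and takes essentially the same route as the paper: both compute $\nabla\varphi^i=A^i$, express $|{\rm d}\varphi|^2$ as a quadratic in $\varphi^1,\varphi^2$ via the pulled-back Heisenberg metric, reduce Corollary~\ref{PL124} to a polynomial identity in $X$ (the paper's $(c_1A^1+c_2A^2)X+c_3=0$ is exactly your $P_1\langle A^i,A^1\rangle+P_2\langle A^i,A^2\rangle=0$), extract $|A^1|^2|A^2|^2-\langle A^1,A^2\rangle^2=0$ from the $i=1$ equation, and verify sufficiency by substituting $A^2=\lambda A^1$. Your $P_1,P_2$ bookkeeping and the explicit appeal to Cauchy--Schwarz saturation are just a slightly cleaner packaging of the paper's argument.
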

\begin{proof}
A straightforward computation gives:
\begin{equation}\label{NL459}
\begin{array}{lll}
\nabla\varphi^{i} =A^{i},\;\;\;\;\;\;i=1, 2, 3,\\
\end{array}
\end{equation}
\begin{eqnarray}\label{NL202}
\left|{\rm d}\varphi\right|^{2}=
\delta^{\alpha\beta}{\varphi^{i}_{\alpha}}{\varphi^{j}_{\beta}}g_{ij}
=\frac{1}{4}|A^{2}|^{2}x^{2}+\frac{1}{4}|A^{1}|^{2}y^{2}
-\frac{1}{2}A^{1}\cdot A^{2}xy\\\notag -A^{2}\cdot A^{3}x
+A^{1}\cdot A^{3}y +(|A^{1}|^{2}+|A^{2}|^{2}+|A^{3}|^{2})\\\notag
\end{eqnarray}
\begin{eqnarray}\label{NL484}
\frac{\partial \left|{\rm d}\varphi\right|^{2}}{\partial x_{k}}
=&\frac{1}{2}(a_{1k}|A^{2}|^{2}- a_{2k}A^{1}\cdot A^{2})x+
\frac{1}{2}(a_{2k}|A^{1}|^{2}- a_{1k}A^{1}\cdot A^{2})y\\\notag
&+a_{2k}A^{1}\cdot A^{3}-a_{1k}A^{2}\cdot A^{3}, \;\;\;\;\;\;\; k=1,
2, \ldots, m.
\end{eqnarray}
It follows from corollary\ref{PL124} that $\varphi$ is
$\infty$-harmonic if and only if
\begin{equation}\label{NL2}
g(\nabla\,\varphi^{i}, \nabla\left|{\rm
d}\varphi\right|^{2})=0,\;\;\;\;\;\;\; i=1, 2, 3,
\end{equation}
which is equivalent to
\begin{equation}\label{NL503}
\begin{array}{lll}
\frac{1}{2}(A^{i}\cdot A^{1}|A^{2}|^{2}- A^{i}\cdot A^{2}A^{1}\cdot
A^{2})x+ \frac{1}{2}(A^{i}\cdot A^{2}|A^{1}|^{2}- A^{i}\cdot
A^{1}A^{1}\cdot A^{2})y\\ +A^{i}\cdot A^{2}A^{1}\cdot
A^{3}-A^{i}\cdot A^{1}A^{2}\cdot A^{3} =0 ,\;\;\;\;\;\;\; i=1, 2,
\ldots,3.
\end{array}
\end{equation}
Substituting $x=A^1X, y=A^2X$ into (\ref{NL503}) we have, for any
$X\in \mathbb{R}^m$,
\begin{equation}\label{line}
\begin{array}{lll}
(c_1A^1+ c_2A^2)X +c_3 =0
\end{array}
\end{equation}
where
\begin{eqnarray}\notag
c_1&=&\frac{1}{2}(A^{i}\cdot A^{1}|A^{2}|^{2}- A^{i}\cdot
A^{2}A^{1}\cdot A^{2}),\\\notag c_2&=&\frac{1}{2}(A^{i}\cdot
A^{2}|A^{1}|^{2}- A^{i}\cdot
A^{1}A^{1}\cdot A^{2}),\\
c_3&=&A^{i}\cdot A^{2}A^{1}\cdot A^{3}-A^{i}\cdot A^{1}A^{2}\cdot
A^{3},\;\;\;\;\;\;\; i=1, 2, 3.
\end{eqnarray}
Since Equation (\ref{line}) holds for any $X\in \mathbb{R}^m$ it can
be viewed as an identity of polynomials. It follows that $\varphi$
is $\infty$-harmonic if and only if $A^1$ and $A^2$ are proportional
to each other and $c_3=0$. One can check that $c_3=0$ is a
consequence of $A^1$ being proportional to $A^2$. Therefore, we
conclude that linear map $\varphi$ from a Euclidean space into
Heisenberg space is $\infty$-harmonic if and only if $A^1$ is
proportional to $A^2$.
\end{proof}
\begin{remark}
It follows from our theorem that the maximum rank of a linear
$\infty$-harmonic map from a Euclidean space into Heisenberg space
is $2$ and a rank $2$ linear $\infty$-harmonic map from a Euclidean
space into Heisenberg space always has non-constant energy density.
We would also like to point out that in \cite{WO} a complete
classification of linear $\infty$-harmonic maps between Euclidean
and Nil spaces is given. It is well known that Nil space is
isometric to Heisenberg space. However, as the linearity of maps
that we study depends on the (local) coordinates used in
$\mathbb{R}^3$ and since the isometry between Nil and Heisenberg
spaces is given by a quadratic polynomial map, the linear maps
between Euclidean and Nil spaces and the linear maps between
Euclidean and Heisenberg spaces are not isometric invariant and
should be treated differently as the following examples show.
\end{remark}
\begin{example}
We can check that $\sigma :(\mathbb{H}_{3}, g)\longrightarrow
(\mathbb{R}^{3},g_{Nil})$ with $\sigma (X,Y,Z)=(X, Y, Z+XY/2)$ is an
isometry from Heisenberg space onto Nil space. If we identify these
two spaces through this isometry, then the linear map
$\varphi:\mathbb{R}^{m}\longrightarrow\mathbb{H}_{3}$ with
\begin{equation}\label{KL404}
\varphi (X)=\left(\begin{array}{ccccc} 1 & -1 & 0&
\ldots & 0\\
2 & -2 & 0 & \ldots & 0\\
 0 & 0 & 0 & \ldots & 0
\end{array}\right)\left(\begin{array}{cccc}x_{1}\\

x_{2}\\

\vdots\\

x_{m}\\

\end{array}\right)
\end{equation}
becomes a quadratic map $\mathbb{R}^{m}\longrightarrow
(\mathbb{R}^{3},g_{Nil})$ with
$\sigma\circ\varphi(X)=(x_1-x_2,2(x_1-x_2),(x_1-x_2)^2)$. It is
interesting to note that the composition $\sigma\circ\varphi$ of
$\varphi$ (which is $\infty$-harmonic by Theorem \ref{KL426}) with
an isometry $\sigma$ is also $\infty$-harmonic. This follows from a
general result in  \cite{OW} that the $\infty$-harmonicity of a map
is invariant under an isometric immersion of the target space of the
map into another manifold.
\end{example}
\begin{example}
It is proved in \cite{OW} that any isometry is an $\infty$-harmonic
morphism meaning that the map preserves $\infty$-harmonicity in the
sense that it pulls back $\infty$-harmonic functions to
$\infty$-harmonic functions. One can also check that an
$\infty$-harmonic morphism pulls back $\infty$-harmonic maps to
$\infty$-harmonic maps. It follows that the isometry $\sigma
:(\mathbb{H}_{3}, g)\longrightarrow (\mathbb{R}^{3},g_{Nil})$ with
$\sigma (X,Y,Z)=(X, Y, Z+XY/2)$ is an $\infty$-harmonic morphism. By
\cite{WO}, the linear map
$\varphi:(\mathbb{R}^{3},g_{Nil})\longrightarrow \mathbb{R}^{n}\;
(n\ge 2)$
\begin{equation}\label{MAP1}
\varphi (X)=\left(\begin{array}{ccc} 0 & a_{12} & a_{13}\\0 & a_{22}
& a_{23}\\\ldots& \ldots & \ldots\\0 & a_{n2} & a_{n3}
\end{array}\right)\left(\begin{array}{ccc} x\\y\\z\end{array}\right).
\end{equation}
is $\infty$-harmonic. Therefore, the composition
$\varphi\circ\sigma:(\mathbb{H}_{3}, g)\longrightarrow
\mathbb{R}^{n}$ given by
\begin{equation}\label{MAP2}
\varphi \circ \sigma(X)=\left(\begin{array}{ccc} 0 & a_{12} &
a_{13}\\0 & a_{22} & a_{23}\\\ldots& \ldots & \ldots\\0 & a_{n2} &
a_{n3}
\end{array}\right)\left(\begin{array}{ccc} x\\y\\z+\frac{1}{2}xy\end{array}\right)
\end{equation}
gives an $\infty$-harmonic map defined by polynomials of degree $2$
from Heisenberg space into a Euclidean space with constant energy
density.
\end{example}

\section{Linear $\infty$-harmonic  maps between Nil and Sol spaces}

In this section we give a complete classification of linear
$\infty$-harmonic maps between Nil and Sol spaces. It turns out that
the maximum rank of linear $\infty$-harmonic maps between Nil and
Sol spaces is $2$ and some of them have constant energy density
while others may have non-constant energy density.\\

 {\bf 3.1 Linear $\infty$-harmonic maps from Nil space into Sol space}.\\

Let $(\mathbb{R}^{3}, g_{Nil})$ and $(\mathbb{R}^{3},g_{Sol})$
denote Nil and Sol spaces, where the metrics with respect to the
standard coordinates $(x,y,z)$ in $\mathbb{R}^{3}$ are given by
$g_{Nil}={\rm d}x^{2}+{\rm d}y^{2}+({\rm d}z-x{\rm d}y)^{2}$ and
$g_{Sol}={e^{2z}}{\rm d}x^{2}+{e^{-2z}}{\rm d}y^{2}+{\rm d}{z}^{2}$
respectively. In the following, we use the notations $g=g_{Nil}$,
$h=g_{Sol}$, the coordinates $\{x, y, z\}$  in
$(\mathbb{R}^{3},g_{Nil})$ and the coordinates
 $\{x\acute{}, y\acute{}, z\acute{}\;\}$  in $(\mathbb{R}^{3},g_{Sol})$, then one can easily compute the following
components of Nil and Sol metrics:
\begin{align}\nonumber
& g_{11}=1,\; g_{12}=g_{13}=0,\;
g_{22}=1+x^{2},\;g_{23}=-x,\;g_{33}=1;\\\nonumber
&g^{11}=1,\;g^{12}=g^{13}=0,\;g^{22}=1,\;g^{23}=x,\;g^{33}=1+x^{2}.\\\notag
&h_{11}=e^{2z\acute{}},\;h_{22}=e^{-2z\acute{}},\;h_{33}=1,\;{\rm all\; other},\;h_{ij}=0;\\\
&h^{11}=e^{-2z\acute{}},\;h^{22}=e^{2z\acute{}},\;h^{33}=1,\;{\rm
all\; other},\;h^{ij}=0.\notag
\end{align}
Now we study the $\infty$-harmonicity of linear maps between Nil and
Sol spaces. First, we give the following classification of linear
$\infty$-harmonic maps from Nil space into Sol space.

\begin{theorem}\label{KL1043}
A linear map $\varphi:(\mathbb{R}^{3},g_{Nil})\longrightarrow
(\mathbb{R}^{3}, g_{Sol})$ from Nil space into Sol space with
\begin{equation}\label{MN942}
\varphi (X)=\left(\begin{array}{ccc} a_{11} & a_{12} &
a_{13}\\a_{21} & a_{22} & a_{23}\\ a_{31} & a_{32} & a_{33}\\
\end{array}\right)\left(\begin{array}{ccc}
x\\y\\z\end{array}\right)
\end{equation}
is $\infty$-harmonic if and only if $\varphi$ takes one of the
following forms:
\begin{equation}\label{MAP82}
\varphi (X)=\left(\begin{array}{ccc}
0 & a_{12} & a_{13}\\
0 & a_{22} & a_{23}\\
0& 0 & 0
\end{array}\right)\left(\begin{array}{ccc}
x\\y\\z\end{array}\right),
\end{equation}
\begin{equation}\label{MAP83}
\varphi (X)=\left(\begin{array}{ccc}
a_{11} & a_{12} & 0\\
a_{21} & a_{22} & 0\\
0& 0 & 0
\end{array}\right)\left(\begin{array}{ccc}
x\\y\\z\end{array}\right),
\end{equation}
\begin{equation}\label{MAP80}
\varphi (X)=\left(\begin{array}{ccc}
0 & 0 & 0\\
0 & 0 & 0\\
0&a_{32}&a_{33}
\end{array}\right)\left(\begin{array}{ccc}
x\\y\\z\end{array}\right),\;\;\; {\rm or}
\end{equation}

\begin{equation}\label{MAP81}
\varphi (X)=\left(\begin{array}{ccc}
0 & 0 & 0\\
0 & 0 & 0\\
a_{31}&a_{32}&0
\end{array}\right)\left(\begin{array}{ccc} x\\y\\z\end{array}\right).
\end{equation}
\end{theorem}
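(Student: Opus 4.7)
The plan is to apply Corollary \ref{PL124} and reduce the problem to the three scalar equations $g(\nabla\varphi^i,\nabla|d\varphi|^2)=0$ for $i=1,2,3$, then exploit the exponential structure of the Sol metric to decouple the resulting system into polynomial identities in $x$ alone.

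First I would compute, using the metric components displayed above,
\begin{equation*}
|d\varphi|^2 = e^{2\varphi^3}Q_1(x) + e^{-2\varphi^3}Q_2(x) + Q_3(x),
\end{equation*}
where $Q_k(x)=a_{k1}^2+(a_{k2}+xa_{k3})^2+a_{k3}^2$ depends only on $x$ for $k=1,2,3$ and all $y,z$ dependence enters solely through $\varphi^3=a_{31}x+a_{32}y+a_{33}z$ inside the exponentials. A short calculation, using the identity $a_{i1}a_{31}+a_{32}(a_{i2}+xa_{i3})+a_{33}(xa_{i2}+(1+x^2)a_{i3})=g(\nabla\varphi^i,\nabla\varphi^3)$, then rearranges the $i$th $\infty$-harmonic equation into the form $E_i^+(x)e^{2\varphi^3}+E_i^-(x)e^{-2\varphi^3}+E_i^0(x)=0$, where
\begin{align*}
E_i^+(x) &= 2g(\nabla\varphi^i,\nabla\varphi^3)Q_1(x)+a_{i1}Q_1'(x),\\
E_i^-(x) &= -2g(\nabla\varphi^i,\nabla\varphi^3)Q_2(x)+a_{i1}Q_2'(x),\\
E_i^0(x) &= a_{i1}Q_3'(x).
\end{align*}

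The core step is a case split on whether the third row $A^3=(a_{31},a_{32},a_{33})$ vanishes. If $A^3\neq 0$, the triple $\{e^{2\varphi^3},e^{-2\varphi^3},1\}$ is linearly independent over $\mathbb{R}[x]$ --- either because $(a_{32},a_{33})\neq(0,0)$ makes $\varphi^3$ free in $y$ or $z$, or because $a_{31}\neq 0$ alone gives the classical independence of $e^{\pm 2a_{31}x},1$ over polynomials in $x$ --- so each of $E_i^+,E_i^-,E_i^0$ must vanish identically in $x$. The identity $E_i^0\equiv 0$ yields $a_{i1}a_{33}=0$ for every $i$; plugging the resulting constraints into $E_3^\pm\equiv 0$ and using that $|\nabla\varphi^3|_g^2$ is a strictly positive polynomial whenever $A^3\neq 0$, one forces $Q_1\equiv Q_2\equiv 0$, i.e.\ $A^1=A^2=0$. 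The sign of $a_{33}$ then separates this into \eqref{MAP80} (when $a_{33}\neq 0$, which drags $a_{31}=0$ along) and \eqref{MAP81} (when $a_{33}=0$). In the complementary case $A^3=0$ the exponentials collapse to $1$, the third equation is automatic, and the equations for $i=1,2$ reduce to $a_{i1}(Q_1'+Q_2')\equiv 0$ in $x$; this bifurcates either into $a_{11}=a_{21}=0$, yielding \eqref{MAP82}, or into $Q_1'+Q_2'\equiv 0$, which forces $a_{13}=a_{23}=0$ and yields \eqref{MAP83}.

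Finally, the converse direction is immediate: in \eqref{MAP81} and \eqref{MAP83} the energy density is constant, so $\nabla|d\varphi|^2=0$; in \eqref{MAP80} and \eqref{MAP82} the first column of $A$ vanishes and $\partial_y|d\varphi|^2=\partial_z|d\varphi|^2=0$, so $g(\nabla\varphi^i,\nabla|d\varphi|^2)$ reduces to a multiple of $a_{i1}=0$. The main obstacle I expect is the clean algebraic packaging of the three bracket coefficients as $\pm 2g(\nabla\varphi^i,\nabla\varphi^3)Q_k+a_{i1}Q_k'$ --- once that is in hand, the subsequent case analysis is essentially mechanical; a secondary subtlety is justifying the exponential-polynomial independence in the degenerate sub-case where $\varphi^3$ depends on $x$ alone, which is handled by the classical independence fact cited above.
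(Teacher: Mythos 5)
Your proposal is correct and follows essentially the same route as the paper: compute $|{\rm d}\varphi|^2=Q_1(x)e^{2z'}+Q_2(x)e^{-2z'}+Q_3(x)$, apply Corollary \ref{PL124}, invoke the linear independence of $\{e^{2z'},e^{-2z'},1\}$ over polynomials in $x$, and split on whether the third row $A^3$ vanishes. The only difference is organizational: your compact packaging $E_i^{\pm}=\pm 2g(\nabla\varphi^i,\nabla\varphi^3)Q_k+a_{i1}Q_k'$, combined with the positivity of $Q_3=|\nabla\varphi^3|_g^2$ when $A^3\neq 0$, lets you read off $Q_1\equiv Q_2\equiv 0$ from the $i=3$ equation in one degree-comparison step, which streamlines the paper's twelve-equation system (\ref{NSOL}) and its sub-cases (B$_1$), (B$_2$), (I)--(III) while arriving at the same four families.
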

\begin{proof}
A straightforward computation gives:
\begin{equation}\label{00E2010}
\begin{array}{lll}
\nabla\varphi^{i}=g^{\alpha \beta}\frac{\partial
\varphi^{i}}{\partial x_{\beta}}
\frac{\partial }{\partial x_{\alpha}} \\
=(a_{i1},\,a_{i3}x+a_{i2},\,a_{i3}x^{2}+a_{i2}x+a_{i3}),\;\;\;\;\;\;i=1,
2, 3.\\\notag
\end{array}
\end{equation}
and
\begin{eqnarray}\notag
\left|{\rm d}\varphi\right|^{2}=g^{\alpha
\beta}{\varphi_{\alpha}}^{i}{\varphi_{\beta}}^{j}h_{ij}\circ\varphi\\\notag
=(a_{13}^{2}x^{2}+2a_{12}a_{13}x+\sum\limits_{j=1}^{3}a_{1j}^{2})e^{2z\acute{}}\\\label{ENER}
+(a_{23}^{2}x^{2}+2a_{22}a_{23}x+\sum\limits_{j=1}^{3}a_{2j}^{2})e^{-2z\acute{}}\\\notag
+(a_{33}^{2}x^{2}+2a_{32}a_{33}x+\sum\limits_{j=1}^{3}a_{3j}^{2}),\;\;\\\notag
\
\end{eqnarray}
where $z\acute{}=a_{31}x+a_{32}y+a_{33}z$.\\

Also, one can check that
\begin{equation}\label{00E2030}
\begin{array}{lll}
\frac{\partial \left|{\rm d}\varphi\right|^{2}}{\partial x_{1}}
=\frac{\partial \left|{\rm d}\varphi\right|^{2}}{\partial x}\\
=2\{a_{31}a_{13}^{2}x^{2}+(a_{13}^{2}+2a_{31}a_{12}a_{13})x
+a_{12}a_{13}+a_{31}\sum\limits_{j=1}^{3}a_{1j}^{2}\}e^{2z\acute{}}\\
-2\{a_{31}a_{23}^{2}x^{2}+(2a_{31}a_{22}a_{23}-a_{23}^{2})x
+a_{31}\sum\limits_{j=1}^{3}a_{2j}^{2}-a_{22}a_{23}\}e^{-2z\acute{}}\\
+2(a_{33}^{2}x+a_{32}a_{33}),\\
\frac{\partial \left|{\rm d}\varphi\right|^{2}}{\partial x_{2}}
=\frac{\partial \left|{\rm d}\varphi\right|^{2}}{\partial y}\\
=2a_{32}(a_{13}^{2}x^{2}+2a_{12}a_{13}x+\sum\limits_{j=1}^{3}a_{1j}^{2})e^{2z\acute{}}\\
-2a_{32}(a_{23}^{2}x^{2}+2a_{22}a_{23}x+\sum\limits_{j=1}^{3}a_{2j}^{2})e^{-2z\acute{}},\\
\frac{\partial \left|{\rm d}\varphi\right|^{2}}{\partial x_{3}}
=\frac{\partial \left|{\rm d}\varphi\right|^{2}}{\partial z}\\
=2a_{33}(a_{13}^{2}x^{2}+2a_{12}a_{13}x+\sum\limits_{j=1}^{3}a_{1j}^{2})e^{2z\acute{}}\\
-2a_{33}(a_{23}^{2}x^{2}+2a_{22}a_{23}x+\sum\limits_{j=1}^{3}a_{2j}^{2})e^{-2z\acute{}}.
\end{array}
\end{equation}
It follows from corollary \ref{PL124} that $\varphi$ is an
$\infty$-harmonic map if and only if
\begin{equation}\label{MAP6}
g(\nabla\,\varphi^{i}, \nabla\left|{\rm
d}\varphi\right|^{2})=0,\;\;\;\;\;\;\; i=1, 2, 3.
\end{equation}
which is equivalent to
\begin{equation}\label{N3669}
\begin{array}{lll}
2\{a_{i3}a_{33}a_{13}^{2}x^{4}+[(a_{i3}a_{32}+a_{i2}a_{33})a_{13}^{2}
+2a_{i3}a_{33}a_{12}a_{13}]x^{3}\\
+[\sum\limits_{k=1}^{3}a_{ik}a_{3k}a_{13}^{2}
+2(a_{i3}a_{32}+a_{i2}a_{33})a_{12}a_{13}
+a_{i3}a_{33}\sum\limits_{j=1}^{3}a_{1j}^{2}]x^{2}\\
+[(a_{i3}a_{32}+a_{i2}a_{33})\sum\limits_{j=1}^{3}a_{1j}^{2}
+2\sum\limits_{k=1}^{3}a_{ik}a_{3k}a_{12}a_{13}+a_{i1}a_{13}^{2}]x\\
+[\sum\limits_{k=1}^{3}a_{ik}a_{3k}\sum\limits_{j=1}^{3}a_{1j}^{2}
+a_{i1}a_{12}a_{13}]\}e^{2z\acute{}}\\
-2\{a_{i3}a_{33}a_{23}^{2}x^{4}+[(a_{i3}a_{32}+a_{i2}a_{33})a_{23}^{2}
+2a_{i3}a_{33}a_{22}a_{23}]x^{3}\\
+[\sum\limits_{k=1}^{3}a_{ik}a_{3k}a_{23}^{2}+2(a_{i3}a_{32}+a_{i2}a_{33})a_{22}a_{23}
+a_{i3}a_{33}\sum\limits_{j=1}^{3}a_{2j}^{2}]x^{2}\\
+[(a_{i3}a_{32}+a_{i2}a_{33})\sum\limits_{j=1}^{3}a_{2j}^{2}
+2\sum\limits_{k=1}^{3}a_{ik}a_{3k}a_{22}a_{23}-a_{i1}a_{23}^{2}]x\\
+[\sum\limits_{k=1}^{3}a_{ik}a_{3k}\sum\limits_{j=1}^{3}a_{2j}^{2}
-a_{i1}a_{22}a_{23}]\}e^{-2z\acute{}}
+2a_{i1}(a_{33}^{2}x+a_{32}a_{33})=0,\\i=1,2,3.
\end{array}
\end{equation}
Case (A): $\sum\limits_{j=1}^{3}a_{3j}^{2}=0$. In this case, (\ref
{N3669}) becomes
\begin{equation}\label{N3693}
\begin{array}{lll}
2a_{i1}(a_{13}^{2}+ a_{23}^{2})x
+2a_{i1}(a_{12}a_{13}+a_{22}a_{23})=0 ,\;\;\;\;\;\;i=1,2,3.
\end{array}
\end{equation}
Solving Equation (\ref{N3693}), we have $a_{i1}=0$ for $i=1,2,3$, or
$a_{13}=a_{23}=0 $. These give the classes of linear $\infty$-harmonic maps corresponding to (\ref{MAP82}) and (\ref{MAP83}).\\

Case (B): $\sum\limits_{j=1}^{3}a_{3j}^{2}\neq 0$. In this case, we
use the fact that the functions \\$1,\; x,\; xe^{2x},\; x^2e^{2x},\;
x^3e^{2x}, \;x^4e^{2x};\; xe^{-2x},\; x^2e^{-2x},\; x^3e^{-2x},
\;x^4e^{-2x}$ are linearly independence to conclude that (\ref
{N3669}) is equivalent to
\begin{equation}\label{NSOL}
\left\{\begin{array}{rl}
  a_{i1}a_{33}^{2}=0, \;\;\langle 1\rangle\\
 a_{i1}a_{32}a_{33}=0, \;\;\langle 2\rangle\\
 a_{i3}a_{33}a_{13}^{2}=0, \;\;\langle 3\rangle\\
  (a_{i3}a_{32}+a_{i2}a_{33})a_{13}^{2}+2a_{i3}a_{33}a_{12}a_{13}=0, \;\;\langle 4\rangle\\
 \sum\limits_{k=1}^{3}a_{ik}a_{3k}a_{13}^{2}
+2(a_{i3}a_{32}+a_{i2}a_{33})a_{12}a_{13}
+a_{i3}a_{33}\sum\limits_{j=1}^{3}a_{1j}^{2}=0, \;\;\langle 5\rangle\\
 (a_{i3}a_{32}+a_{i2}a_{33})\sum\limits_{j=1}^{3}a_{1j}^{2}
+2\sum\limits_{k=1}^{3}a_{ik}a_{3k}a_{12}a_{13}+a_{i1}a_{13}^{2}=0, \;\;\langle 6\rangle\\
 \sum\limits_{k=1}^{3}a_{ik}a_{3k}\sum\limits_{j=1}^{3}a_{1j}^{2}
+a_{i1}a_{12}a_{13} =0, \;\;\langle 7\rangle\\
  a_{i3}a_{33}a_{23}^{2}=0, \;\;\langle 8\rangle\\
 (a_{i3}a_{32}+a_{i2}a_{33})a_{23}^{2}
+2a_{i3}a_{33}a_{22}a_{23}=0, \;\;\langle 9\rangle\\
\sum\limits_{k=1}^{3}a_{ik}a_{3k}a_{23}^{2}+2(a_{i3}a_{32}+a_{i2}a_{33})a_{22}a_{23}
+a_{i3}a_{33}\sum\limits_{j=1}^{3}a_{2j}^{2}=0, \;\;\langle 10\rangle\\
  (a_{i3}a_{32}+a_{i2}a_{33})\sum\limits_{j=1}^{3}a_{2j}^{2}
+2\sum\limits_{k=1}^{3}a_{ik}a_{3k}a_{22}a_{23}-a_{i1}a_{23}^{2}=0. \;\;\langle 11\rangle\\
 \sum\limits_{k=1}^{3}a_{ik}a_{3k}\sum\limits_{j=1}^{3}a_{2j}^{2}
-a_{i1}a_{22}a_{23}=0. \;\;\langle12\rangle
\end{array}\right.
\end{equation}
It follows from $\langle 1\rangle$ of (\ref{NSOL}) that either $
a_{i1}=0$ for $i=1, 2, 3$, or $a_{33}=0$.\\
Case ($B_1$): $\sum\limits_{i=1}^{3}a_{i1}^{2}=0$. In this case, we
have $a_{32}^{2}+a_{33}^{2}\neq 0$ since we are in Case (B). It
follows that the Equations $\langle7\rangle$ and $\langle12\rangle$ of (\ref {NSOL}) reduce to be\\
\begin{equation}\label{N3658}
\left\{\begin{array}{rl}
(a_{i3}a_{33}+a_{i2}a_{32})\sum\limits_{j=1}^{3}a_{1j}^{2}=0\\
(a_{i3}a_{33}+a_{i2}a_{32})\sum\limits_{j=1}^{3}a_{2j}^{2}=0,\;\;i=1, 2, 3.\\
\end{array}\right.
\end{equation}
Writing out the Equation (\ref{N3658}) with $i=3$ we have that
$a_{1j}=a_{2j}=0$ for $j=1,2,3$ and we can check that these,
together with $a_{j1}=0$, are solutions of the Equations (\ref
{NSOL}). These correspond to the class of linear $\infty$-harmonic maps given by (\ref{MAP80}).\\
 Case ($B_2$): $a_{33}=0$ and hence $a_{31}^{2}+a_{32}^{2}\neq 0$ since we are in Case (B).\\
In this case, Equation (\ref{NSOL}) reduces to
\begin{equation}\label{N3670}
\left\{\begin{array}{rl}
a_{i3}a_{32}a_{13}^{2}=0,\\
(a_{i2}a_{32}+a_{i1}a_{31})a_{13}^{2} +2a_{i3}a_{32}a_{12}a_{13}=0, \\
a_{i3}a_{32}\sum\limits_{j=1}^{3}a_{1j}^{2}
+2(a_{i2}a_{32}+a_{i1}a_{31})a_{12}a_{13}+a_{i1}a_{13}^{2}=0, \\
(a_{i2}a_{32}+a_{i1}a_{31})\sum\limits_{j=1}^{3}a_{1j}^{2}
+a_{i1}a_{12}a_{13} =0,\\
a_{i3}a_{32}a_{23}^{2}=0,\\
(a_{i2}a_{32}+a_{i1}a_{31})a_{23}^{2} +2a_{i3}a_{32}a_{22}a_{23}=0,\\
a_{i3}a_{32}\sum\limits_{j=1}^{3}a_{2j}^{2}
+2(a_{i2}a_{32}+a_{i1}a_{31})a_{22}a_{23}-a_{i1}a_{23}^{2}=0, \\
(a_{i2}a_{32}+a_{i1}a_{31})\sum\limits_{j=1}^{3}a_{2j}^{2}
-a_{i1}a_{22}a_{23} =0,
\end{array}\right.\;\;\;i=1,2,3.
\end{equation}
It follows from the first equation of (\ref {N3670})that we either
have $a_{13}=0$ or $a_{32}=0$. By considering
following cases:\\
(I) $a_{13}=0,a_{32}\neq 0$,\\
(II) $a_{13}\neq 0,a_{32}= 0$, hence, $a_{31}\neq 0$ \\
(III) $a_{13}= 0,a_{32}= 0$, hence, $a_{31}\neq 0$ \\
we obtain that $a_{i3}=0,a_{1i}=a_{2i}=0,$ \;for \; $i=1, 2, 3,$ are
solution of the Equations (\ref {NSOL}), which give the class of
linear $\infty$-harmonic maps corresponding to (\ref
{MAP81}).\\
 Thus, we obtain the theorem.
\end{proof}
\begin{remark}
It follows from our theorem that the maximum rank of linear
$\infty$-harmonic maps from Nil into Sol is $2$. Using the energy
density formula (\ref{ENER}) we can check that some of them have
non-constant energy density while others have constant energy
density.
\end{remark}

 {\bf 3.2 Linear $\infty$-harmonic  maps from Sol space
 into Nil space}.\\

 The linear $\infty$-harmonic maps from Sol space into Nil space can be completely described by the following theorem.
\begin{theorem}\label{KL1043}
A linear map $\varphi:(\mathbb{R}^{3}, g_{Sol})\longrightarrow
(\mathbb{R}^{3},g_{Nil})$ from Sol space into Nil space with
\begin{equation}\label{MN942}
\varphi (X)=\left(\begin{array}{ccc} a_{11} & a_{12} &
a_{13}\\a_{21} & a_{22} & a_{23}\\ a_{31} & a_{32} & a_{33}\\
\end{array}\right)\left(\begin{array}{ccc}
x\\y\\z\end{array}\right)
\end{equation}
is $\infty$-harmonic if and only if $\varphi$ takes one of the
following forms:
\begin{equation}\label{MAP192185}
\varphi (X)=\left(\begin{array}{ccc}
0 & 0 & 0\\
a_{21} & a_{22} & 0\\
a_{31}&a_{32}&0
\end{array}\right)\left(\begin{array}{ccc}
x_{1}\\x_{2}\\x_{3}\end{array}\right),
\end{equation}

\begin{equation}\label{MAP192200}
\varphi (X)=\left(\begin{array}{ccc}
a_{11}& a_{12} & 0\\
0 & 0 & 0\\
a_{31}&a_{32}&0
\end{array}\right)\left(\begin{array}{ccc}
x_{1}\\x_{2}\\x_{3}\end{array}\right),
\end{equation}
\begin{equation}\label{MAP192193}
\varphi (X)=\left(\begin{array}{ccc}
0 &0 & 0\\
0 & 0 &a_{23}\\
0 &0&a_{33}
\end{array}\right)\left(\begin{array}{ccc}
x_{1}\\x_{2}\\x_{3}\end{array}\right),\; {\rm or}
\end{equation}

\begin{equation}\label{MAP192208}
\varphi (X)=\left(\begin{array}{ccc}
0 &0 & a_{13}\\
0 & 0 &0\\
0 &0&a_{33}
\end{array}\right)\left(\begin{array}{ccc} x_{1}\\x_{2}\\x_{3}\end{array}\right).
\end{equation}
\end{theorem}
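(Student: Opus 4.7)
The plan is to mirror the strategy used in the Nil-into-Sol theorem but now with Sol as the source. Writing the source Sol coordinates as $(u,v,w)$ (playing the role of $(x_1,x_2,x_3)$ in the statement) and the target Nil coordinates as $(x,y,z)$, I will first compute the Sol-gradients
\begin{equation*}
\nabla\varphi^{i}=(e^{-2w}a_{i1},\,e^{2w}a_{i2},\,a_{i3}),\qquad i=1,2,3,
\end{equation*}
and the energy density by pulling back $g_{Nil}$ along $\varphi$. Since only $g_{22}=1+x^{2}$ and $g_{23}=-x$ depend on target coordinates, the relevant quantity is $P:=\varphi^{1}=a_{11}u+a_{12}v+a_{13}w$, and $|\mathrm d\varphi|^{2}$ splits as $e^{-2w}S_{1}+e^{2w}S_{2}+S_{3}$, where each $S_{k}=a_{1k}^{2}+(1+P^{2})a_{2k}^{2}-2Pa_{2k}a_{3k}+a_{3k}^{2}$ is polynomial in $P$.

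Next I will differentiate $|\mathrm d\varphi|^{2}$ in $u,v,w$, noting that $\partial_u P=a_{11}$, $\partial_v P=a_{12}$, $\partial_w P=a_{13}$, and that differentiating in $w$ produces the extra terms $-2e^{-2w}S_{1}+2e^{2w}S_{2}$. Substituting into the $\infty$-harmonicity conditions $g_{Sol}(\nabla\varphi^{i},\nabla|\mathrm d\varphi|^{2})=0$ from Corollary~\ref{PL124} yields, for each $i=1,2,3$, an expression of the shape
\begin{equation*}
e^{-4w}F^{i}_{-2}+e^{-2w}F^{i}_{-1}+F^{i}_{0}+e^{2w}F^{i}_{1}+e^{4w}F^{i}_{2}=0,
\end{equation*}
where each $F^{i}_{k}$ is a polynomial in $(u,v,w)$. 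By the linear independence of $\{1,e^{\pm 2w},e^{\pm 4w}\}$ over polynomial coefficients and then by comparing monomial coefficients in $(u,v,w)$ inside each $F^{i}_{k}$, I obtain a finite system of algebraic equations in the entries $a_{jk}$, analogous to the system $\langle 1\rangle$--$\langle 12\rangle$ in the previous theorem.

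The case analysis then organises the solutions. I would split first on whether $\sum_{j}a_{3j}^{2}$ vanishes (the extreme exponents $e^{\pm 4w}$ force products like $a_{11}a_{i1}a_{21}(Pa_{21}-a_{31})\equiv 0$ and $a_{12}a_{i2}a_{22}(Pa_{22}-a_{32})\equiv 0$, which quickly divide the problem into the two "column" families and the two "third-column only" families), and then refine according to which of $a_{11},a_{12},a_{13},a_{21},a_{22},a_{23}$ survive. The four matrix forms (\ref{MAP192185})--(\ref{MAP192208}) should appear as exactly the configurations that simultaneously kill every $F^{i}_{k}$: the first two arise when $a_{13}=a_{33}=0$ together with one of $A^{1}=0$ or $A^{2}=0$, while the last two arise when the whole $(u,v)$-block of $A$ vanishes and only the $z$-column of Nil is hit, with one of the first two target coordinates killed.

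The routine part is checking that each listed form satisfies the system (substitute and observe all $F^{i}_{k}$ collapse). The main obstacle will be the converse direction: systematically running the case tree without missing a sub-case, since the coupling between $S_{1},S_{2}$ and the $Q$-type quantity $e^{-2w}a_{21}(Pa_{21}-a_{31})+e^{2w}a_{22}(Pa_{22}-a_{32})+a_{23}(Pa_{23}-a_{33})$ produces many cross terms; careful bookkeeping analogous to equations $\langle 1\rangle$--$\langle 12\rangle$ of the preceding proof should close all cases and show that no hybrid solutions exist beyond the four families.
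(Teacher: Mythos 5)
Your overall strategy is the same as the paper's: compute $\nabla\varphi^{i}$ and $|{\rm d}\varphi|^{2}$, expand the conditions $g(\nabla\varphi^{i},\nabla|{\rm d}\varphi|^{2})=0$ into the exponential levels $e^{0},e^{\pm 2w},e^{\pm 4w}$, extract algebraic equations by linear independence, and finish with a case analysis; your formulas for the gradients, for the $S_{k}$, and for the extreme-exponent coefficients $a_{i1}a_{11}a_{21}(Pa_{21}-a_{31})$ and $a_{i2}a_{12}a_{22}(Pa_{22}-a_{32})$ all agree with what the paper obtains. However, there is a concrete organizational flaw: you propose to open the case analysis by asking whether $\sum_{j}a_{3j}^{2}$ vanishes, but in this direction of the map that quantity plays no special role. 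The exponentials have the \emph{source} coordinate $w=x_{3}$ as argument, so they never degenerate; what can degenerate is the linear form $P=\varphi^{1}=a_{11}u+a_{12}v+a_{13}w$, the only quantity through which $g_{Nil}$ enters. Your step ``comparing monomial coefficients in $(u,v,w)$ inside each $F^{i}_{k}$'' is legitimate only when $P\not\equiv 0$; when the first row of $A$ vanishes, the coefficients of $P$ and $P^{2}$ need not vanish individually, and imposing them over-constrains the system. The paper accordingly splits on $\sum_{j}a_{1j}^{2}=0$ versus $\neq 0$, and the degenerate case $P\equiv 0$ is precisely where the families (\ref{MAP192185}) and (\ref{MAP192193}) arise. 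Your proposed split reads as a carry-over from the Nil-into-Sol theorem, where the third row is the argument of the exponentials.

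A second, smaller error is in your description of the solution set: you say the first two families arise from ``$a_{13}=a_{33}=0$ together with one of $A^{1}=0$ or $A^{2}=0$.'' For (\ref{MAP192185}) this is not enough: with $A^{1}=0$ and $a_{13}=a_{33}=0$ the entry $a_{23}$ is still free, and a matrix such as the one with $a_{21}=a_{23}=1$ and all other entries zero satisfies your description but violates the condition $a_{23}\sum_{j}a_{j1}^{2}=0$ coming from the $P\equiv 0$ case, so it is not $\infty$-harmonic and is not among the four listed forms. The correct bookkeeping is that the entire third column must vanish together with one of the first two rows (forms (\ref{MAP192185}) and (\ref{MAP192200})), or the first two columns must vanish and in addition one of $a_{13},a_{23}$ must vanish (forms (\ref{MAP192193}) and (\ref{MAP192208})). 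With the primary dichotomy corrected to $\sum_{j}a_{1j}^{2}=0$ versus $\neq 0$ and this description repaired, your plan does reduce to the paper's proof.
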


\begin{proof}
 Using the notations $g=g_{Sol}$, $h=g_{Nil}$, and the coordinates $\{x_1, x_2, x_3\}$  in $(\mathbb{R}^{3},g_{Sol})$ and
 $\{y_1, y_2, y_3\}$  in $(\mathbb{R}^{3},g_{Nil})$ we compute the
following components of Nil and Sol metric:
\begin{align}\nonumber
&g_{11}=e^{2x_{3}},\;g_{22}=e^{-2x_{3}},\;g_{33}=1,\;{\rm all\; other},\;g_{ij}=0,;\\\
&g^{11}=e^{-2x_{3}},\;g^{22}=e^{2x_{3}},\;g^{33}=1,\;{\rm all\;
other},\;g^{ij}=0.\\\notag & h_{11}=1,\; h_{12}=g_{13}=0,\;
h_{22}=1+y_{1}^{2},\;h_{23}=-y_{1},\;h_{33}=1;\\\nonumber
&h^{11}=1,\;h^{12}=h^{13}=0,\;h^{22}=1,\;h^{23}=y_{1},\;h^{33}=1+y_{1}^{2}.\notag
\end{align}

A straightforward computation gives:
\begin{equation}
\begin{array}{lll}
\nabla\varphi^{i}=g^{\alpha \beta}\frac{\partial \varphi^{i}}{\partial x_{\beta}}
\frac{\partial }{\partial x_{\alpha}} \\
=(a_{i1}e^{-2x_{3}},\,a_{i2}e^{2x_{3}},\,a_{i3}),\;\;\;\;\;\;i=1, 2,
3,
\end{array}
\end{equation}
and
\begin{equation}\label{00E1991}
\begin{array}{lll}
\left|{\rm d}\varphi\right|^{2}=g^{\alpha
\beta}{\varphi_{\alpha}}^{i}{\varphi_{\beta}}^{j}h_{ij}\circ \varphi\\
=(e^{-2x_{3}}a_{21}^{2}+e^{2x_{3}}a_{22}^{2}+a_{23}^{2})y_{1}^{2}
-2(e^{-2x_{3}}a_{21}a_{31}+e^{2x_{3}}a_{22}a_{32}+a_{23}a_{33})y_{1}\\
+\sum\limits_{i=1}^{3}a_{i1}^{2}e^{-2x_{3}}+\sum\limits_{i=1}^{3}a_{i2}^{2}e^{2x_{3}}
+\sum\limits_{i=1}^{3}a_{i3}^{2},
\end{array}
\end{equation}
where $y_{1}=a_{11}x_{1}+a_{12}x_{2}+a_{13}x_{3}$.\\
 Also, we can check that
\begin{equation*}
\begin{array}{lll}
\frac{\partial \left|{\rm d}\varphi\right|^{2}}{\partial x_{1}}
=\\2(a_{11}a_{21}^{2}e^{-2x_{3}}+a_{11}a_{22}^{2}e^{2x_{3}}+a_{11}a_{23}^{2})y_{1}\\
-2(a_{11}a_{21}a_{31}e^{-2x_{3}}+a_{11}a_{22}a_{32}e^{2x_{3}}+a_{11}a_{23}a_{33}),
\end{array}
\end{equation*}
\begin{equation*}
\begin{array}{lll}
\frac{\partial \left|{\rm d}\varphi\right|^{2}}{\partial x_{2}}
=\\2(a_{12}a_{21}^{2}e^{-2x_{3}}+a_{12}a_{22}^{2}e^{2x_{3}}+a_{12}a_{23}^{2})y_{1}\\
-2(a_{12}a_{21}a_{31}e^{-2x_{3}}+a_{12}a_{22}a_{32}e^{2x_{3}}+a_{12}a_{23}a_{33}),
\end{array}
\end{equation*}
and
\begin{equation*}
\begin{array}{lll}
\frac{\partial \left|{\rm d}\varphi\right|^{2}}{\partial x_{3}}
=\\2(a_{22}^{2}e^{2x_{3}}-a_{21}^{2}e^{-2x_{3}})y_{1}^{2}\\
+2\{(a_{13}a_{21}^{2}+2a_{21}a_{31})e^{-2x_{3}}
+(a_{13}a_{22}^{2}-2a_{22}a_{32})e^{2x_{3}}
+a_{13}a_{23}^{2}\}y_{1}\\
-2\{(\sum\limits_{i=1}^{3}a_{i1}^{2}+a_{13}a_{21}a_{31})e^{-2x_{3}}+
(a_{13}a_{22}a_{32}-\sum\limits_{i=1}^{3}a_{i2}^{2}) e^{2x_{3}}
+a_{13}a_{23}a_{33})\}.
\end{array}
\end{equation*}
Using Corollary \ref{PL124} we conclude that $\varphi$ is an
$\infty$-harmonic if and only if
\begin{eqnarray}\notag
&&2(a_{i3}a_{22}^{2}e^{2x_{3}}-a_{i3}a_{21}^{2}e^{-2x_{3}})y_{1}^{2}\\\notag
&&+2\{a_{i1}a_{11}a_{21}^{2}e^{-4x_{3}}+a_{i2}a_{12}a_{22}^{2}e^{4x_{3}}\\\notag
&&+(a_{i1}a_{11}a_{23}^{2}+a_{i3}a_{13}a_{21}^{2}+2a_{i3}a_{21}a_{31}^{2})e^{-2x_{3}}\\\notag
&&+(a_{i2}a_{12}a_{23}^{2}+a_{i3}a_{13}a_{22}^{2}-2a_{i3}a_{22}a_{32}^{2})e^{2x_{3}}\\\label{N1991}
&&+(a_{i1}a_{11}a_{22}^{2}+a_{i2}a_{12}a_{21}^{2}+a_{i3}a_{13}a_{23}^{2}\}y_{1}\\\notag
&&-2\{a_{i1}a_{11}a_{21}a_{31}e^{-4x_{3}}+a_{i2}a_{12}a_{22}a_{32}e^{4x_{3}}\\\notag
&&+(a_{i1}a_{11}a_{23}a_{33}+a_{i3}a_{13}a_{21}a_{31}
+a_{i3}\sum\limits_{j=1}^{3}a_{j1}^{2})e^{-2x_{3}}\\\notag
&&+(a_{i2}a_{12}a_{23}a_{33}+a_{i3}a_{13}a_{22}a_{32}
-a_{i3}\sum\limits_{j=1}^{3}a_{j2}^{2})e^{2x_{3}}\\\notag
&&+a_{i1}a_{11}a_{22}a_{32}+a_{i2}a_{12}a_{21}a_{31}+a_{i3}a_{13}a_{23}a_{33}\}=0,\;\;i=1,
2,3.\\\notag
\end{eqnarray}
Case (A): $\sum\limits_{j=1}^{3}a_{1j}^{2}=0$. It follows that
$y_{1}=a_{11}x_{1}+a_{12}x_{2}+a_{13}x_{3}=0$ and Equation
(\ref{N1991}) reduces to
\begin{equation}\label{00E11940}
\left\{\begin{array}{rl}
a_{i3}\sum\limits_{j=1}^{3}a_{j1}^{2}e^{-2x_{3}}=0,\\
a_{i3}\sum\limits_{j=1}^{3}a_{j2}^{2}e^{2x_{3}}=0,
\end{array}\right.\;\;\;\;\;i=1,2,3,
\end{equation}
which has solutions
$a_{i3}=0,\;or,\;a_{i1}=a_{i2}=0,\;\;for,\;i=1,2,3$. These give the
linear $\infty$-harmonic maps defined by (\ref{MAP192185}) and
(\ref{MAP192193}).\\

Case (B) $\sum\limits_{j=1}^{3}a_{1j}^{2}\neq 0$. In this case, we
use Equation (\ref{N1991}) and the fact that the functions $1,
te^{2t},t^2e^{2t}; te^{-2t}, t^2e^{-2t}; te^{4t},t^2e^{4t};
te^{-4t}, t^2e^{-4t}$ are linearly independence to conclude that
$\varphi$ is $\infty$-harmonic if and only if
\begin{equation}\label{MN1385}
\left\{\begin{array}{rl}
a_{i3}a_{22}^{2}=0\\
 a_{i3}a_{21}^{2}=0,
\end{array}\right.\;\;\;\;\;\;\;
i=1, 2,3,
\end{equation}
\begin{equation}\label{MN1393}
\left\{\begin{array}{rl}
a_{i1}a_{11}a_{21}^{2}=0\\
a_{i2}a_{12}a_{22}^{2}=0\\
a_{i1}a_{11}a_{23}^{2}+a_{i3}a_{13}a_{21}^{2}
+2a_{i3}a_{21}a_{31}^{2}=0\\
a_{i2}a_{12}a_{23}^{2}+a_{i3}a_{13}a_{22}^{2}
-2a_{i3}a_{22}a_{32}^{2}=0\\
a_{i1}a_{11}a_{22}^{2}-a_{i2}a_{12}a_{21}^{2}
+a_{i3}a_{13}a_{23}^{2}=0,\\
\end{array}\right.\;\;\;\;\;\;\;
i=1, 2,3,
\end{equation}
and
\begin{equation}\label{MN1407}
\left\{\begin{array}{rl}
a_{i1}a_{11}a_{21}a_{31}=0\\
a_{i2}a_{12}a_{22}a_{32}=0\\
a_{i1}a_{11}a_{23}a_{33}+a_{i3}a_{13}a_{21}a_{31}
+a_{i3}\sum\limits_{j=1}^{3}a_{j1}^{2}=0\\
a_{i2}a_{12}a_{23}a_{33}+a_{i3}a_{13}a_{22}a_{32}
-a_{i3}\sum\limits_{j=1}^{3}a_{j2}^{2}=0\\
a_{i1}a_{11}a_{22}a_{32}+a_{i2}a_{12}a_{21}a_{31}+a_{i3}a_{13}a_{23}a_{33}=0
,\\
\end{array}\right.\;\;\;\;\;\;\; i=1, 2,3.
\end{equation}
In this case, it is  easy to check that $a_{2i}=a_{i3}=0$ or
$a_{i1}=a_{i2}=a_{2i}=0$, $\;for\;i=1, 2, 3,$ are solutions of
system (\ref{MN1385}), (\ref{MN1393}) and (\ref{MN1407}). These give
the linear $\infty$-harmonic maps defined by (\ref{MAP192200}) and
(\ref{MAP192208}). Thus, we obtain the theorem.
\end{proof}
\begin{remark}
Again, we remark that the maximum rank of linear $\infty$-harmonic
maps from Sol into Nil is $2$. Using the energy density formula
(\ref{00E1991}) we can check that all rank $2$ linear
$\infty$-harmonic maps from Sol into Nil have non-constant energy
density.
\end{remark}

\section{$\infty$-Harmonic linear endomorphisms of Sol space}

In this final section, we study the $\infty$-harmonicity of linear
endomorphisms of Sol space. We give a complete classification of
$\infty$-harmonic linear endomorphisms of Sol space. It turns out
that an $\infty$-harmonic linear endomorphism of Sol space can have
maximum rank, i.e., there are $\infty$-harmonic linear
diffeomorphisms from Sol space onto itself which have constant
energy density and which are not isometries. We also show that there
is a subgroup of $\infty$-harmonic linear automorphisms in the group
of linear isomorphisms.
\begin{theorem}\label{KL1534}
 A linear endomorphism $\varphi:(\mathbb{R}^{3},g_{Sol})\longrightarrow
(\mathbb{R}^{3}, g_{Sol})$ of Sol space with
\begin{equation}\label{MN942}
\varphi (X)=\left(\begin{array}{ccc} a_{11} & a_{12} &
a_{13}\\a_{21} & a_{22} & a_{23}\\ a_{31} & a_{32} & a_{33}\\
\end{array}\right)\left(\begin{array}{ccc}
x\\y\\z\end{array}\right)
\end{equation}
is $\infty$-harmonic if and only if $\varphi$ takes one of the
following forms:
\begin{equation}\label{MAP8500}
\varphi (X)=\left(\begin{array}{ccc}
a_{11} & 0 & 0\\
0 & a_{22} & 0\\
0& 0 & 1
\end{array}\right)\left(\begin{array}{ccc}
x\\y\\z\end{array}\right),
\end{equation}
\begin{equation}\label{MAP8600}
\varphi (X)=\left(\begin{array}{ccc}
0 & a_{12} & 0\\
a_{21} & 0 & 0\\
0& 0 & -1
\end{array}\right)\left(\begin{array}{ccc}
x\\y\\z\end{array}\right),
\end{equation}
\begin{equation}\label{MAP8300}
\varphi (X)=\left(\begin{array}{ccc}
a_{11} & a_{12} & 0\\
a_{21} & a_{22} & 0\\
0& 0 & 0
\end{array}\right)\left(\begin{array}{ccc}
x\\y\\z\end{array}\right),
\end{equation}
\begin{equation}\label{MAP8400}
\varphi (X)=\left(\begin{array}{ccc}
0 & 0& 0\\
0 & 0& 0\\
0 & 0 & a_{33}
\end{array}\right)\left(\begin{array}{ccc}
x\\y\\z\end{array}\right),
\end{equation}
\begin{equation}\label{MAP8100}
\varphi (X)=\left(\begin{array}{ccc}
0 & 0 & 0\\
0 & 0 & 0\\
a_{31}&a_{32}&0
\end{array}\right)\left(\begin{array}{ccc}
x\\y\\z\end{array}\right), \; {\rm or}
\end{equation}
\begin{equation}\label{MAP8200}
\varphi (X)=\left(\begin{array}{ccc}
0 & 0& a_{13}\\
0 & 0& a_{23}\\
0& 0 & 0
\end{array}\right)\left(\begin{array}{ccc} x\\y\\z\end{array}\right).
\end{equation}
\end{theorem}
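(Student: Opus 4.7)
The plan is to follow the strategy of Theorems 3.1 and 3.2, now with both source and target equipped with the Sol metric. First I would compute $\nabla\varphi^{i}=(a_{i1}e^{-2z},a_{i2}e^{2z},a_{i3})$ using $g^{11}=e^{-2z}$, $g^{22}=e^{2z}$, $g^{33}=1$, and then the energy density $|{\rm d}\varphi|^{2}=g^{\alpha\beta}\varphi^{i}_{\alpha}\varphi^{j}_{\beta}(h_{ij}\circ\varphi)$, which is a linear combination of the nine exponentials $e^{\alpha z+\beta z'}$ with $\alpha,\beta\in\{-2,0,2\}$, where $z'=a_{31}x+a_{32}y+a_{33}z$ and the coefficients are quadratic monomials in the entries of $A$. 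Differentiating and contracting as in Corollary \ref{PL124} then produces three master equations indexed by $i=1,2,3$, each a linear combination of up to fifteen exponentials of the form $e^{\alpha z+\beta z'}$ with $\alpha\in\{-4,-2,0,2,4\}$ and $\beta\in\{-2,0,2\}$.

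The argument then hinges on when these exponentials are linearly independent as functions of $(x,y,z)$. The linear forms $\alpha z+\beta z'=\beta a_{31}x+\beta a_{32}y+(\alpha+\beta a_{33})z$ are pairwise distinct iff $(a_{31},a_{32})\ne(0,0)$ or $a_{33}\notin\{0,\pm 1\}$; the degenerate loci $(a_{31},a_{32})=(0,0)$ with $a_{33}\in\{0,\pm 1\}$ produce collisions that allow additional solutions. I would therefore split into a generic case and three degenerate subcases. In the generic case, linear independence forces every exponential coefficient in each master equation to vanish, yielding an algebraic system whose only solutions are the rank-$\le 2$ normal forms of the statement; essentially, the vanishing of the coefficients of $e^{\pm 4z\pm 2z'}$ already forces $a_{i1}a_{11}=a_{i2}a_{12}=a_{i3}a_{33}=0$ for all $i$, and the remaining identities then pick out those matrices with vanishing third row (or vanishing first two rows). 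The subcases $z'=z$ and $z'=-z$ collapse the exponents so that the system admits the rank-$3$ diagonal and anti-diagonal families, while the subcase $z'\equiv 0$ merely recovers rank-$\le 2$ forms already produced by the generic analysis.

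The main obstacle will be bookkeeping: tracking coefficients across all exponential channels in three simultaneous master equations yields dozens of polynomial relations among the $a_{ij}$, organized by the index $i=1,2,3$ and by the $(\alpha,\beta)$ channel. To keep the argument clean I would first dispatch the three degenerate subcases by direct substitution of $a_{31}=a_{32}=0$ and $a_{33}\in\{0,\pm 1\}$, each reducing to a small polynomial system analogous to those solved in Section 3, and only afterwards carry out the generic case, which collapses quickly once the highest-weight channels are exploited. A final routine verification that each of the six listed matrix forms satisfies (\ref{PL111}) — straightforward because enough entries of $A$ vanish in each form that $\nabla|{\rm d}\varphi|^{2}$ lies in the kernel of ${\rm d}\varphi$ by inspection — completes the classification.
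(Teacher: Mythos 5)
Your overall strategy is the same as the paper's: compute $\nabla\varphi^{i}=(a_{i1}e^{-2z},a_{i2}e^{2z},a_{i3})$, expand $|{\rm d}\varphi|^{2}$ and the three equations $g(\nabla\varphi^{i},\nabla|{\rm d}\varphi|^{2})=0$ as combinations of exponentials $e^{\alpha z+\beta z'}$ with $z'=a_{31}x+a_{32}y+a_{33}z$, and extract polynomial relations from linear independence, with the rank-$3$ families appearing exactly on the collision loci $z'=\pm z$. The paper organizes the cases by whether the third row vanishes (Case A, giving (\ref{MAP8300}) and (\ref{MAP8200})) and then, within Case B, uses the $\beta=0$ channels $e^{\pm 2z}$, whose coefficients are $a_{i3}a_{32}^{2}$ and $a_{i3}a_{31}^{2}$, to force the dichotomy ``$a_{13}=a_{23}=a_{33}=0$'' (giving (\ref{MAP8100})) or ``$a_{31}=a_{32}=0$'' (giving (\ref{MAP8500}), (\ref{MAP8600}), (\ref{MAP8400}) according to $a_{33}=1,-1$, or neither). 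Two specific claims in your outline are, however, incorrect as stated and leave genuine gaps.

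First, your characterization of the degenerate locus is wrong: the exponents are \emph{not} pairwise distinct whenever $(a_{31},a_{32})\ne(0,0)$ or $a_{33}\notin\{0,\pm1\}$. When $a_{31}=a_{32}=0$ the six channels that actually carry nonzero coefficients have exponents $(\pm2+2a_{33})z$, $\pm2a_{33}z$, $(2-2a_{33})z$, $(-2-2a_{33})z$, and these also collide at $a_{33}=\pm\tfrac12$ (the full fifteen-channel list collides at further rational values of $a_{33}$). On those loci your ``generic'' argument — setting each coefficient separately to zero — is sufficient but not justified; one must check that the vanishing of the \emph{summed} coefficients, e.g.\ $a_{i3}(a_{11}^{2}+a_{23}^{2})=0$ and $a_{i3}(a_{13}^{2}+a_{22}^{2})=0$ at $a_{33}=\tfrac12$, still forces the first two rows to vanish (it does, via $i=3$), or else reorganize the case split as the paper does. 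Second, the claim that the $e^{\pm4z\pm2z'}$ channels force $a_{i1}a_{11}=a_{i2}a_{12}=a_{i3}a_{33}=0$ is false: their coefficients are $a_{i2}a_{32}a_{12}^{2}$, $a_{i1}a_{31}a_{11}^{2}$, $a_{i2}a_{32}a_{22}^{2}$, $a_{i1}a_{31}a_{21}^{2}$, which say nothing about $a_{i3}a_{33}$ and vanish identically on the locus $a_{31}=a_{32}=0$ where all the interesting (rank-$3$) solutions live. The reduction that actually makes the generic case collapse is the one the paper uses, coming from the $\beta=0$ channels $e^{\pm2z}$. With these two points repaired the argument goes through and reproduces the paper's classification.
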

\begin{proof}
We use $g$ and $h$ to denote the metrics in the domain and the
target manifolds respectively. With respect to the coordinates $\{x,
y, z\}$ in the domain and $\{x\acute{}, y\acute{}, z\acute{}\;\}$ in
the target manifold, we one can easily write down the following
components of metrics:
\begin{align*}\nonumber
&g_{11}=e^{2z},\;g_{22}=e^{-2z},\;g_{33}=1,\;{\rm all\; other},\;g_{ij}=0;\\\
&g^{11}=e^{-2z},\;g^{22}=e^{2z},\;g^{33}=1,\;{\rm all\;
other},\;g^{ij}=0.\\
&h_{11}=e^{2z\acute{}},\;h_{22}=e^{-2z\acute{}},\;h_{33}=1,\;{\rm all\; other},\;h_{ij}=0;\\\
&h^{11}=e^{-2z\acute{}},\;h^{22}=e^{2z\acute{}},\;h^{33}=1,\;{\rm
all\; other},\;h^{ij}=0.\notag
\end{align*}
A direct computation gives:
\begin{equation}
\begin{array}{lll}
\nabla\varphi^{i} =g^{\alpha \beta}\frac{\partial
\varphi^{i}}{\partial x_{\beta}}\frac{\partial}{\partial
x_{\alpha}}\\
=(a_{i1}e^{-2z},\,a_{i2}e^{2z},\,a_{i3}),\;\;\;\;\;\;i=1, 2,
3,\\\notag
\end{array}
\end{equation}
and
\begin{eqnarray}\label{SOLS}
&&\left|{\rm d}\varphi\right|^{2}= g^{\alpha
\beta}{\varphi_{\alpha}}^{i}{\varphi_{\beta}}^{j}
h_{ij}\circ\varphi=g^{\alpha \alpha}(\frac{\partial
\varphi^{i}}{\partial x_{\alpha}})^{2}h_{ii}\circ\varphi\\\notag
&&=(a_{12}^{2}e^{2z}+a_{11}^{2}e^{-2z}+a_{13}^{2})e^{2z\acute{}}\\\notag
&&+(a_{22}^{2}e^{2z}+a_{21}^{2}e^{-2z}+a_{23}^{2})e^{-2z\acute{}}
+(a_{32}^{2}e^{2z}+a_{31}^{2}e^{-2z}+a_{33}^{2}),
\end{eqnarray}
where $z\acute{}=a_{31}x+a_{32}y+a_{33}z$. Furthermore, we compute
that
\begin{equation}
\begin{array}{lll}
\frac{\partial \left|{\rm d}\varphi\right|^{2}}{\partial
x_{1}}=\frac{\partial \left|{\rm d}\varphi\right|^{2}}{\partial x}\\
=2a_{31}(a_{12}^{2}e^{2z}+a_{11}^{2}e^{-2z}+a_{13}^{2})e^{2z\acute{}}
-2a_{31}(a_{22}^{2}e^{2z}+a_{21}^{2}e^{-2z}+a_{23}^{2})e^{-2z\acute{}},\\
\frac{\partial \left|{\rm d}\varphi\right|^{2}}{\partial
x_{2}}=\frac{\partial \left|{\rm d}\varphi\right|^{2}}{\partial y}\\
=2a_{32}(a_{12}^{2}e^{2z}+a_{11}^{2}e^{-2z}+a_{13}^{2})e^{2z\acute{}}
-2a_{32}(a_{22}^{2}e^{2z}+a_{21}^{2}e^{-2z}+a_{23}^{2})e^{-2z\acute{}},\\
\frac{\partial \left|{\rm d}\varphi\right|^{2}}{\partial
x_{3}}=\frac{\partial \left|{\rm d}\varphi\right|^{2}}{\partial z}\\
=2\{(a_{12}^{2}+a_{33}a_{12}^{2})e^{2z}+(a_{33}a_{11}^{2}-
a_{11}^{2})e^{-2z}+a_{33}a_{13}^{2}\}e^{2z\acute{}}\\
-2\{(a_{33}a_{22}^{2}-a_{22}^{2})e^{2z}+(a_{33}a_{21}^{2}+
a_{21}^{2})e^{-2z}+a_{33}a_{23}^{2}\}e^{-2z\acute{}}\\
+2(a_{32}^{2}e^{2z}-a_{31}^{2}e^{-2z}).\\
\end{array}
\end{equation}
By Corollary \ref{PL124} $\varphi$ is an $\infty$-harmonic if and
only if
\begin{equation}\label{MAP601}
g(\nabla\,\varphi^{i}, \nabla\left|{\rm
d}\varphi\right|^{2})=0,\;\;\;\;\;\;\; i=1, 2, 3,
\end{equation}
which is equivalent to
\begin{eqnarray}\notag
0=&&\{a_{i2}a_{32}a_{12}^{2}e^{4z}+a_{i1}a_{31}a_{11}^{2}e^{-4z}\\\notag
&&+(a_{i2}a_{32}a_{13}^{2}+a_{i3}a_{12}^{2}+a_{i3}a_{33}a_{12}^{2})e^{2z}\\\notag
&&+(a_{i1}a_{31}a_{13}^{2}-a_{i3}a_{11}^{2}+a_{i3}a_{33}a_{11}^{2})e^{-2z}\\\notag
&&+a_{i1}a_{31}a_{12}^{2}+a_{i2}a_{32}a_{11}^{2}+a_{i3}a_{33}a_{13}^{2}\}e^{2z\acute{}}\\\label{MAP993}
&&-\{a_{i2}a_{32}a_{22}^{2}e^{4z}+a_{i1}a_{31}a_{21}^{2}e^{-4z}\\\notag
&&+(a_{i2}a_{32}a_{23}^{2}-a_{i3}a_{22}^{2}+a_{i3}a_{33}a_{22}^{2})e^{2z}\\\notag
&&+(a_{i1}a_{31}a_{23}^{2}+a_{i3}a_{21}^{2}+a_{i3}a_{33}a_{21}^{2})e^{-2z}\\\notag
&&+a_{i1}a_{31}a_{22}^{2}+a_{i2}a_{32}a_{21}^{2}+a_{i3}a_{33}a_{23}^{2}\}e^{-2z\acute{}}\\\notag
&&+a_{i3}(a_{32}^{2}e^{2z}-a_{31}^{2}e^{-2z}),\;\;\;\;\;\;i=1,2,3.
\end{eqnarray}
Case (A): $a_{31}^{2}+a_{32}^{2}+a_{33}^{2}=0$. It follows that
$z\acute{}=a_{31}x+a_{32}y+a_{33}z=0$, and the Equation (\ref
{MAP993}) becomes
\begin{equation}\label{N31463}
\begin{array}{lll}
a_{i3}\{(a_{12}^{2}+a_{22}^{2})e^{2z}-(a_{11}^{2}+a_{21}^{2})e^{-2z}\}=0,\;\;\;i=1,2,3,
\end{array}
\end{equation}
which gives the solutions $a_{11}=a_{12}=a_{21}=a_{22}=0$, or, $
a_{i3}=0,\;for\; i=1,2,3.$ These give the linear $\infty$-harmonic
maps of the form (\ref{MAP8300}) and (\ref{MAP8200}).\\

Case (B): $a_{31}^{2}+a_{32}^{2}+a_{33}^{2}\neq 0$. We use
 Equation (\ref {MAP993}) and the fact that the functions
 $e^{k_1t},
 e^{-k_1t}; e^{k_2t},
 e^{-k_2t}; e^{k_3t},
 e^{-k_3t}; e^{k_4t},
 e^{-k_4t}; e^{k_5t},
 e^{-k_5t}$ with $k_1, \ldots, k_5$ distinctive are linearly
 independent to conclude that $\varphi$ is $\infty$-harmonic if and
 only if
\begin{equation}\label{N3301}
\left\{\begin{array}{rl} a_{i3}a_{32}^{2}=0,\;\;\;\;\;\;\;\;\;\;\;\langle 1\rangle\\
a_{i3}a_{31}^{2}=0,\;\;\;\;\;\;\;\;\;\;\;\langle 2\rangle\\
 a_{i2}a_{32}a_{12}^{2}=0,\;\;\;\;\;\;\;\;\;\;\;\langle3\rangle\\
a_{i1}a_{31}a_{11}^{2}=0,\;\;\;\;\;\;\;\;\;\;\;\langle 4\rangle\\
a_{i2}a_{32}a_{13}^{2}+a_{i3}a_{12}^{2}+a_{i3}a_{33}a_{12}^{2}=0,\;\;\;\;\;\;\;\;\;\;\;\langle5\rangle\\
a_{i1}a_{31}a_{13}^{2}-a_{i3}a_{11}^{2}+a_{i3}a_{33}a_{11}^{2}=0,\;\;\;\;\;\;\;\;\;\;\;\langle 6\rangle\\
a_{i1}a_{31}a_{12}^{2}+a_{i2}a_{32}a_{11}^{2}+a_{i3}a_{33}a_{13}^{2} =0,\;\;\;\;\;\;\;\;\;\;\;\langle7\rangle\\
  a_{i2}a_{32}a_{22}^{2}=0,\;\;\;\;\;\;\;\;\;\;\;\langle 8\rangle\\
a_{i1}a_{31}a_{21}^{2}=0,\;\;\;\;\;\;\;\;\;\;\;\langle 9\rangle\\
a_{i2}a_{32}a_{23}^{2}-a_{i3}a_{22}^{2}+a_{i3}a_{33}a_{22}^{2}=0,\;\;\;\;\;\;\;\;\;\;\;\langle 10\rangle\\
a_{i1}a_{31}a_{23}^{2}+a_{i3}a_{21}^{2}+a_{i3}a_{33}a_{21}^{2}=0,\;\;\;\;\;\;\;\;\;\;\;\langle 11\rangle\\
a_{i1}a_{31}a_{22}^{2}+a_{i2}a_{32}a_{21}^{2}+a_{i3}a_{33}a_{23}^{2} =0.\;\;\;\;\;\;\;\;\;\;\;\langle 12\rangle\\
\end{array}\right.
\end{equation}
It follows from $\langle 1\rangle$ and $\langle 2\rangle$ of
(\ref{N3301}) that
\begin{equation}\label{N33201}
 a_{i3}=0, \;\; {\rm or},\;a_{31}= a_{32}=0,\;\;\;i=1, 2, 3.
\end{equation}

Case (B$_1$): $a_{i3}=0,\;\;\;i=1, 2, 3$ and hence
$a_{31}^{2}+a_{32}^{2}\neq
0$.\\
Performing $\langle3\rangle+\langle4\rangle+\langle7\rangle,
\langle8\rangle+\langle9\rangle+\langle12\rangle$ separatively
yields
\begin{equation}\label{N31205}
\left\{\begin{array}{rl}
(a_{11}^{2}+a_{12}^{2})(a_{i1}a_{31}+a_{i2}a_{32})=0\\
(a_{21}^{2}+a_{22}^{2})(a_{i1}a_{31}+a_{i2}a_{32})=0,\\
\end{array}\right.\;\;\;\;i=1,2,3,
\end{equation}
which gives us solutions $ a_{11}=a_{21}=a_{12}=a_{22}=0$. These
give $\infty$-harmonic linear automorphisms of the form defined in
(\ref{MAP8100}).\\
Case (B$_2$): $a_{31}=a_{32}=0$, and hence $a_{33}\neq 0$.\\
In this case, Equation (\ref{N3301}) reduces to
\begin{equation}\label{N31230}
 \left\{\begin{array}{rl}
a_{i3}a_{12}^{2}(1+a_{33})=0\\
a_{i3}a_{11}^{2}(a_{33}-1)=0\\
a_{i3}a_{33}a_{13}^{2} =0,\\
a_{i3}a_{22}^{2}(a_{33}-1)=0\\
a_{i3}a_{21}^{2}(1+a_{33})=0\\
a_{i3}a_{33}a_{23}^{2} =0,\\
\end{array}\right.\;\;\;\;i=1,2,3.
\end{equation}
We solve this system by considering the following three case:\\ (I)
$a_{33}=1$. By (\ref{N31230}), we have
\begin{equation}\label{N31246}
 \left\{\begin{array}{rl}
 a_{i3}a_{12}^{2}=0\\
a_{i3}a_{13}^{2} =0\\
a_{i3}a_{21}^{2}=0\\
a_{i3}a_{23}^{2} =0,\\
\end{array}\right.\;\;\;\;i=1,2,3.
\end{equation}
Letting $i=3$ we conclude that $a_{12}=a_{13}=a_{21}=a_{23}=0$, which give the solutions of the form (\ref{MAP8500}). \\
(II) $a_{33}=-1$. In this case, (\ref{N31230}) reduces to
\begin{equation}\label{N31257}
 \left\{\begin{array}{rl}
a_{i3}a_{11}^{2}=0\\
a_{i3}a_{13}^{2} =0\\
a_{i3}a_{22}^{2}=0\\
a_{i3}a_{23}^{2} =0,\\
\end{array}\right.\;\;\;\;i=1,2,3.
\end{equation}
Letting  $i=3$ we conclude that $a_{11}=a_{22}=a_{13}=a_{23}=0$, which give the solutions of the form (\ref{MAP8600}).\\
(III) $a_{33}\neq \pm 1,\;0$. Then, (\ref{N31230}) becomes
\begin{equation}\label{N31268}
 \left\{\begin{array}{rl}
a_{i3}a_{12}^{2}=0\\
a_{i3}a_{11}^{2}=0\\
a_{i3}a_{13}^{2} =0\\
a_{i3}a_{22}^{2}=0\\
a_{i3}a_{21}^{2}=0\\
a_{i3}a_{23}^{2} =0,\\
\end{array}\right.\;\;\;\;i=1,2,3.
\end{equation}
Letting $i=3$ we get $a_{11}=a_{12}=a_{13}=a_{21}=a_{22}=a_{23}=0$, which give the solutions of the form (\ref{MAP8400}). \\
Summarizing all results in the above cases we obtain the Theorem.
\end{proof}
\begin{corollary}
Every element of the subgroup
\begin{eqnarray}
\left\{ \varphi \in {\rm GL}(\mathbb{R}^3):\;\;
 \varphi(X)=\left(\begin{array}{ccc}
\lambda & 0 & 0\\
0 & \mu & 0\\
0& 0 & 1
\end{array}\right)\left(\begin{array}{ccc}
x\\y\\z\end{array}\right), \lambda \mu\ne 0\right\}
\end{eqnarray}
 of the linear automorphism group of Sol space is $\infty$-harmonic.
\end{corollary}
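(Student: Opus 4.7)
The plan is essentially a direct reduction to Theorem \ref{KL1534}. I would proceed in two short steps.

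First, I would verify that the given set $G$ is indeed a subgroup of ${\rm GL}(\mathbb{R}^3)$. The product of two diagonal matrices with third diagonal entry equal to $1$ is again a diagonal matrix of the same shape, with new diagonal entries $(\lambda_1\lambda_2,\mu_1\mu_2,1)$; the identity corresponds to $\lambda=\mu=1$; and the inverse of the matrix with diagonal $(\lambda,\mu,1)$ is the matrix with diagonal $(\lambda^{-1},\mu^{-1},1)$, which lies in $G$ precisely because $\lambda\mu\neq 0$. Hence $G$ is a subgroup of ${\rm GL}(\mathbb{R}^3)$.

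Second, I would observe that every element of $G$ is literally of the form (\ref{MAP8500}) appearing in Theorem \ref{KL1534}, with $a_{11}=\lambda$ and $a_{22}=\mu$. Since Theorem \ref{KL1534} classifies the $\infty$-harmonic linear endomorphisms of Sol space and lists (\ref{MAP8500}) among the admissible forms, every element of $G$ is automatically $\infty$-harmonic. This completes the argument.

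The only conceptual point worth emphasizing is that such a $\varphi$ is a genuine diffeomorphism of Sol onto itself (being an invertible linear self-map of $\mathbb{R}^3$) yet is not in general an isometry of the Sol metric $e^{2z}{\rm d}x^2+e^{-2z}{\rm d}y^2+{\rm d}z^2$, since scaling $x$ and $y$ by generic constants $\lambda,\mu$ distorts the metric. Thus the corollary exhibits a two-parameter family of $\infty$-harmonic linear automorphisms lying outside the isometry group, which is precisely the phenomenon announced at the start of this section. There is no substantive obstacle in the proof — the difficult classification work has already been carried out in Theorem \ref{KL1534}, and only an identification step and a routine group-theoretic check remain.
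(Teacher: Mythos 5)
Your proposal is correct and follows essentially the same route as the paper: both deduce the $\infty$-harmonicity of each element directly from the form (\ref{MAP8500}) in Theorem \ref{KL1534} and then note that closure under products and inverses is a routine check. Your version is slightly more explicit in identifying $a_{11}=\lambda$, $a_{22}=\mu$ and in writing out the group axioms, but there is no substantive difference.
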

\begin{proof}
It follows from Theorem \ref{KL1534} that every element of the
subgroup is an $\infty$-harmonic map. A straightforward checking
shows that the inverse elements and the products of elements of the
subgroup are also $\infty$-harmonic.
\end{proof}
\begin{remark}
It follows from our theorem that the maximum rank of linear
$\infty$-harmonic endomorphisms of Sol space is $3$, so we can have
linear $\infty$-harmonic diffeomorphisms which have constant energy
density and which are not isometries. Using the energy density
formula (\ref{SOLS}) we can check that all rank $2$ linear
$\infty$-harmonic maps from Sol into itself have non-constant energy
density.
\end{remark}

{\bf Acknowledgments}\\

  I would like to thank my adviser Prof. Dr. Ye-Lin Ou for his guidance, help, and encouragement through
many invaluable discussions, suggestions, and stimulating questions
during the preparation of this work.


\begin{thebibliography}{99}

\bibitem[Ar1]{Ar1} G. Aronsson, {\em Extension of functions satisfying Lipschitz conditions}, Ark. Mat. 6
(1967) 551--561.
\bibitem[Ar2]{Ar2} G. Aronsson, {\em On the partial differential equation
$u_{x}^{2}u_{xx} +2u_{x}u_{y}u_{xy}+u_{y}^{2}u_{yy}=0$}. Ark. Mat. 7
(1968) 395--425.
\bibitem[ACJ]{ACJ} G. Aronsson, M. Crandall, and P. Juutinen, {\em A tour of the theory of absolutely minimizing functions}
Bull. Amer. Math. Soc. (N.S.) 41 (2004), no. 4, 439--505.
\bibitem[BB]{BB} G. Barles and J. Busca, {\em Existence and comparison results for fully nonlinear degenerate
elliptic equations without zeroth-order term}, Comm. Partial
Differential Equations 26 (2001), no. 11-12, 2323--2337.
\bibitem[Ba]{Ba} E. N. Barron, {\em Viscosity solutions and analysis in $L^\infty$}, in Nonlinear Analysis,
Differential Equations and Control (ed. by Clarke and Stern), Kluwer
Academic Publishers, 1999, 1-60.
\bibitem[BLW1]{BLW1} E. N. Barron, R. Jensen, and C. Y. Wang, {\em Lower Semicontinuity of $L^\infty$ functionals},
\bibitem[BLW2]{BLW2} E. N. Barron, R. Jensen, and C. Y. Wang, {\em Euler equations and absolute minimizers
of $L^\infty$ functionals}, Arch. Ration. Mech. Anal. 157 (2001),
no. 4, 255--283.
\bibitem[BEJ]{BEJ} E. N. Barron, L. C. Evans, and R. Jensen, The infinity Laplacian, Aronsson's equation and their generalizations, Preprint.
\bibitem[Bh]{Bh} T. Bhattacharya, {\em A note on non-negative singular infinity-harmonic functions in
he half-space}. Rev. Mat. Complut. 18 (2005), no. 2, 377--385.
\bibitem[CMS]{CMS} V. Caselles, J. -M. Morel, and C. Sbert, {\em An axiomatic approach to image interpolation}, IEEE Trans. Image Process.
7 (1998), no. 3, 376--386.
\bibitem[CEPB]{CEPB} G. Cong, M. Esser, B. Parvin, and G. Bebis, {\em Shape
metamorphism using $p$-Laplacian equation}, Proceedings of the 17th
International Conference on Pattern Recognition, (2004), Vol. 4,
15-18.
\bibitem[CE]{CE} M. G. Crandall and L. C. Evans, {\em A remark on infinity harmonic functions}. Proceedings of
the USA-Chile Workshop on Nonlinear Analysis (Vi$\tilde{{\rm n}}$a
del Mar-Valparaiso, 2000), 123--129.
\bibitem[CEG]{CEG} M. G. Crandall, L. C. Evans, and R. F. Gariepy, {\em Optimal Lipschitz extensions and the infinity Laplacian}.
Calc. Var. Partial Differential Equations 13 (2001), no. 2,
123--139.
\bibitem[CIL]{CIL} M. G. Crandall, H. Ishii, and P.-L. Lions, {\em User's guide to viscosity solutions of second order partial differential equations},
 Bull. Amer. Math. Soc. (N.S.) 27 (1992), no. 1, 1--67.
\bibitem[CY]{CY} M. G. Crandall and J. Zhang, {\em Another way to say harmonic}, Trans. Amer. Math. Soc. 355 (2003), 241-263.
\bibitem[EG]{EG} L. C. Evans and W. Gangbo, {\em Differential equations methods for the Monge-Kantorovich
mass transfer problem}, Mem. Amer. Math. Soc. 137 (1999), no. 653.
\bibitem[EY]{EY} L. C. Evans and Y. Yu, {\em Various properties of
solutions of the infinity -Laplace equation}, Preprint.
\bibitem[J]{J} R. Jensen, {\em Uniqueness of Lipschitz extensions: minimizing the sup norm of the gradient},
Arch. Rational Mech. Anal. 123 (1993), no. 1, 51--74.
\bibitem[JK]{JK} P. Juutinen and B. Kawohl, {\em On the evolution
governed by the infinity Laplacian}, Preprint.
\bibitem[JLM1]{JLM1} P. Juutinen, P. Lindqvist, and J. Manfredi,  {\em The infinity Laplacian: examples and observations}.
Papers on analysis, 207--217, Rep. Univ. Jyv$\ddot{{\rm
a}}$skyl$\ddot{{\rm a}}$ Dep. Math. Stat., 83, Univ. Jyv$\ddot{{\rm
a}}$skyl$\ddot{{\rm a}}$, Jyv$\ddot{{\rm a}}$skyl$\ddot{{\rm a}}$,
2001.
\bibitem[JLM2]{JLM2}P. Juutinen, P.
Lindqvist, and J. Manfredi, {\em The $\infty$ -eigenvalue problem},
Arch. Ration. Mech. Anal. 148 (1999), no. 2, 89--105.
\bibitem[LM1]{LM1} P. Lindqvist and J. Manfredi, {\em The Harnack inequality for
$\infty$ -harmonic functions}, Elec- tron. J. Differential Equations
(1995), No. 04, approx. 5 pp.
\bibitem[LM2]{LM2} P. Lindqvist and J. Manfredi, {\em Note on $\infty$ -superharmonic
functions}, Rev. Mat. Univ. Complut. Madrid 10 (1997), no. 2,
471--480.
\bibitem[Ob]{Ob} A. M. Oberman, {\em A convergent difference scheme for the infinity Laplacian:
construction of absolutely minimizing Lipschitz extensions}, Math.
Comp. 74 (2005), no. 251, 1217--1230 (electronic).
\bibitem[Ou1]{Ou1} Y. -L. Ou, {\em $p$-Harmonic morphisms, minimal
foliations, and rigidity of metrics}, J. Geom. Phys. 52 (2004), no.
4, 365--381.
\bibitem[Ou2]{Ou2} Y. -L. Ou, {\em $p$-Harmonic functions and the minimal
graph equation in a Riemannain manifold}, Illinois Journal of Math,
49(3) 2005, 911-927.
\bibitem[Ou3]{Ou3} Y. -L. Ou, Personal communication.
\bibitem[OTW]{OW} Y. -L. Ou, T. Troutman, and F. Wilhelm, $\infty$-harmonic maps and morphisms between Riemannian manifolds, preprint, 2007.

\bibitem[Sa]{Sa}  G. Sapiro, {\em Geometric partial differential equations and image analysis},
Cambridge University Press, Cambridge, 2001.
\bibitem[WO]{WO} Z. -P. Wang and Y. -L. Ou, {\em Some
classifications of $\infty$-Harmonic maps between Rienmannian
manifolds}, preprint, 2006.
\end{thebibliography}
\end{document}